\numberwithin{equation}{section}
\theoremstyle{plain}
\newtheorem{theorem}{Theorem}[section]
\newtheorem*{theorem*}{Theorem}
\newtheorem{lemma}{Lemma}[section]
\newtheorem{corollary}{Corollary}[section]
\newtheorem{proposition}{Proposition}[section]
\newtheorem{claim}{Claim}[section]
\theoremstyle{definition}
\newtheorem*{definition*}{Definition}
\begin{document}
  \title{On the uniform distribution of the zero ordinates of the $L-$function
associated with $\theta(z)^{-3}\,\eta(2z)^{12}$}

\author{{Pedro Ribeiro}} 
\thanks{
{\textit{ Keywords}} :  {Zeros of Dirichlet series, Uniform distribution, Half-integral weight cusp forms}

{\textit{2020 Mathematics Subject Classification} }: {Primary: 11F37, 11F66, 11M41.}

Department of Mathematics, Faculty of Sciences of University of Porto, Rua do Campo Alegre,  687; 4169-007 Porto (Portugal). 

\,\,\,\,\,E-mail: pedromanelribeiro1812@gmail.com}
\date{}
 
\maketitle

\begin{abstract}
We show that the ordinates of the nontrivial zeros of certain $L-$functions
attached to half-integral weight cusp forms are uniformly distributed
modulo one. 

\end{abstract}

\tableofcontents 

\pagenumbering{arabic}

\section{Introduction}

A sequence of real numbers $(\gamma_{n})_{n\in\mathbb{N}}$ is said
to be uniformly distributed modulo 1 if, for any pair of real numbers
$0\leq a<b\leq1$, we have the relation

\begin{equation}
\lim_{N\rightarrow\infty}\frac{\#\left\{ n\leq N:\,a\leq\{\gamma_{n}\}<b\right\} }{N}=b-a, \label{modulo one}
\end{equation}
where $\{x\}$ denotes the fractional part of $x$. A necessary and
sufficient condition for a sequence to be uniformly distributed modulo
1 is Weyl's criterion: it says that a sequence $(\gamma_{n})_{n\in\mathbb{N}}$ satisfies (\ref{modulo one}) if and only if
\begin{equation}
\sum_{n=1}^{N}e^{2\pi im\gamma_{n}}=\text{o}(N),\label{to prove Weyl criterion}
\end{equation}
for every $m\in\mathbb{Z}\setminus\{0\}$.

\bigskip{}

The study of the uniform distribution of the ordinates (i.e., the
imaginary parts) of the nontrivial zeros of $\zeta(s)$, $(\gamma_{n})_{n\in\mathbb{N}}$,
was inaugurated by Rademacher in 1956 \cite{rademacher}. Rademacher's
result, which established the uniform distribution of $\left(\alpha\gamma_{n}\right)_{n\in\mathbb{N}}$,
$\alpha\neq0$, was dependent on the truth of the Riemann hypothesis.
Elliott and, later, Hlawka obtained the analogous unconditional result.
Hlawka also considered upper bounds for the discrepancy of the sequence $\left(\alpha\gamma_{n}\right)_{n\in\mathbb{N}},\,0<\gamma_{n}\leq T$.
For more interesting historical developments on this subject, we refer
to the introduction of \cite{akbary_murty} and to the fourth section
of \cite{steuding_100}.

Akbary and Murty extended much of these results to a class of Dirichlet
series containing the Selberg class. Their results were also extended
in new directions by several authors. For example, Garunk\v{s}tis, Steuding
and \v{S}im\.{e}nas proved the uniform distribution of the $a-$points of
the Selberg zeta function \cite{selberg_zeta_apoints}. When $a\neq1$, the study of the uniform
distribution of the $a-$points of some L functions in the Selberg
Class was also considered by Jakhlouti, Mazhouda and Steuding \cite{steuding_a_points}. Concerning the Riemann $\zeta-$function, Baluyot and Gonek found discrepancy estimates for the ordinates of its $a-$points, $(\lambda \gamma_{a,n})_{n\in \mathbb{N}}$. 

Although the existence of an Euler product is an important feature
of the functions in the Selberg class as well as in the class of Dirichlet
series considered in \cite{akbary_murty}, it is possible to study the uniform distribution
of the zero ordinates of some Dirichlet series that do not possess
this property. Regarding the zeros of Epstein zeta functions, Schmeller
\cite{schmeller} was able to prove a conditional result concerning
their uniform distribution. One may notice, however, that for binary
quadratic forms, Schmeller's result is no longer conditional, thanks
to a mean value theorem due to M\"uller \cite{muller}.

\bigskip{}

In this paper, we study the uniform distribution of the zero ordinates
of $L-$functions attached to some
cusp forms having half-integral weight. To get started, let $k\in\mathbb{N}$ and denote by $S_{k+\frac{1}{2}}\left(\Gamma_{0}(4)\right)$
the space of cusp forms of weight $k+\frac{1}{2}$ on $\Gamma_{0}(4)$. This means that any element $f(z)\in S_{k+\frac{1}{2}}\left(\Gamma_{0}(4)\right)$
satisfies\footnote{Throughout this paper we define $\sqrt{z}=z^{1/2}$ so that $-\frac{\pi}{2}<\arg(z^{1/2})\leq\frac{\pi}{2}$.}
\begin{equation}
f\left(\frac{az+b}{cz+d}\right)=\left(\frac{c}{d}\right)^{2k+1}\epsilon_{d}^{-2k-1}\left(cz+d\right)^{k+\frac{1}{2}}f(z),\label{modular properties half integral weight}
\end{equation}
whenever $\left(\begin{array}{cc}
a & b\\
c & d
\end{array}\right)\in\Gamma_{0}(4)$. Here, $\left(\frac{c}{d}\right)$ is Shimura's extension of the
Jacobi symbol (see \cite{shimura_half}, p. 442{]}) and $\epsilon_{d}$ is
defined by
\begin{equation}
\epsilon_{d}=\begin{cases}
1 & d\equiv1\,\,\mod4\\
i & d\equiv3\,\,\mod4
\end{cases}.\label{character shimura epsilon d}
\end{equation}
The theory of modular forms of half-integral weight was extensively
developed by Shimura and we refer to the seminal paper \cite{shimura_half}
for some classical facts about these modular forms. Since $\left(\begin{array}{cc}
1 & 1\\
0 & 1
\end{array}\right)\in\Gamma_{0}(4)$, $f(z)$ has a Fourier expansion of the form
\begin{equation}
f(z)=\sum_{n=1}^{\infty}a_{f}(n)\,e^{2\pi inz},\label{Fourier expansion of cusp intro}
\end{equation}
where $z\in\mathbb{H}=\left\{ w\in\mathbb{C}\,:\,\text{Im}(w)>0\right\} $.
We can attach a Dirichlet series $L(s,f)$ to $f(z)$,
\begin{equation}
L(s,f)=\sum_{n=1}^{\infty}\frac{a_{f}(n)}{n^{s}},\label{Dirichlet L series cusp form at intro first}
\end{equation}
and show that, for sufficiently large $\text{Re}(s)$, this series
converges absolutely. The usual argument invoked to study the analytic
continuation of $L-$functions of cusp forms with integral weight
can also be applied to the half-integral case. Thus, $L(s,f)$ can
be analytically continued to the whole complex plane $\mathbb{C}$
as an entire function of $s$. Moreover, it will satisfy Hecke's functional equation, 
\begin{equation}
\pi^{-s}\Gamma\left(s\right)\,L\left(s,f\right)=\pi^{-\left(k+\frac{1}{2}-s\right)}\Gamma\left(k+\frac{1}{2}-s\right)\,L\left(k+\frac{1}{2}-s,f|W_{4}\right),\label{functional equation first Cusp}
\end{equation}
where $W_{4}$ denotes the
Fricke involution acting on $S_{k+\frac{1}{2}}\left(\Gamma_{0}(4)\right)$,
\begin{equation}
\left(f|W_{4}\right)(z)=i^{k+\frac{1}{2}}2^{-k-\frac{1}{2}}z^{-k-\frac{1}{2}}f\left(-\frac{1}{4z}\right).\label{Fricke involution definition}
\end{equation}

\bigskip{}

Yoshida \cite{yoshida} seems to have been the first
mathematician to ever study the zeros of $L-$functions attached to these cusp forms. To illustrate
one of Yoshida's examples, let
\[
\theta(z)=\sum_{n\in\mathbb{Z}}e^{2\pi in^{2}z},\,\,\,\,\,\eta(z)=e^{\frac{\pi iz}{12}}\prod_{n=1}^{\infty}\left(1-e^{2\pi inz}\right),\,\,\,\,z\in\mathbb{H}.
\]
Then one can construct {[}\cite{shimura_half}, p. 477{]} a cusp form $g(z)\in S_{\frac{9}{2}}\left(\Gamma_{0}(4)\right)$
by taking
\begin{equation}
g(z):=\theta(z)^{-3}\eta(2z)^{12}.\label{cusp form 9/2 intro}
\end{equation}
Yoshida considered the $L-$function attached to $g$, $L(s,g)$,
and from the calculation of some of its zeros {[}\cite{yoshida}, p.
675{]}, he showed that the analogue of the Riemann hypothesis for
$L(s,g)$ is false.

Nevertheless, since there are Dirichlet series (such as the Epstein
zeta function attached to binary quadratic forms) which do not obey
to the Riemann hypothesis but still possess a positive proportion
of zeros on the critical line, it is not unreasonable to study, desspite Yoshida's
counterexample, some analogues of Hardy's Theorem for this class of $L-$funcrions. 

\bigskip{}

As far as we know, the work of J. Meher, S. Pujahari and K. Srinivas \cite{meher_srinivas}
contains the first result of Hardy-type for some $L-$functions attached to cusp forms of half-integral weight. They have proved the following theorem.

\paragraph*{Theorem A \cite{meher_srinivas}}\label{theorem A}\textit{ Suppose that $f(z)=\sum_{n=1}^{\infty}a_{f}(n)\,e^{2\pi inz}\in S_{k+\frac{1}{2}}\left(\Gamma_{0}(4)\right)$
is an eigenform for all Hecke operators $\mathcal{T}_{n^{2}}$ and for the operator
$W_{4}$ (\ref{Fricke involution definition}). Assume also that all
the Fourier coefficients of $f(z)$, $a_{f}(n)$, are either real
or purely imaginary numbers. Then the $L-$function (\ref{Dirichlet L series cusp form at intro first})
attached to $f$ has infinitely many zeros on the critical line $\text{Re}(s)=\frac{k}{2}+\frac{1}{4}$.}

\bigskip{}

This result was extended to a larger class of $L-$functions by  J. Meher, S. Pujahari and K. Shankhadhar \cite{meher_half}. For $L-$functions having twisted coefficients, a generalization of a result of Wilton was provided by a beautiful argument of Kim \cite{kim}. The author of this paper \cite{ribeiro_half} proved analogues of the Hardy-Littlewood estimate for the number of critical zeros of the $L-$functions considered by these authors. 
In this paper, however, we take the particular class of $L-$functions studied in \cite{meher_srinivas}, and use some particular features of these $L-$functions that allow to prove the uniform distribution of their zero ordinates. 

The assumptions in the Theorem A above allow to rewrite (\ref{functional equation first Cusp}) in a simpler form. Indeed, since $W_{4}$ is an involution, we know that when $f$ is an eigenform
for $W_{4}$, $f|W_{4}(z)=(-1)^{\ell}f(z)$ with $\ell\in\{0,1\}$.
Thus, when $f$ satisfies the conditions of Theorem A, the 
functional equation of its $L-$function can be written in the following form
\begin{equation}
\pi^{-s}\Gamma\left(s\right)\,L\left(s,f\right)=(-1)^{\ell}\,\pi^{-\left(k+\frac{1}{2}-s\right)}\Gamma\left(k+\frac{1}{2}-s\right)\,L\left(k+\frac{1}{2}-s,f\right).\label{functional equation iwht l}
\end{equation}

Alongside (\ref{functional equation iwht l}), other properties of the class considered by Theorem A will prove to be very important to ensure the uniform distribution of the zeros of their $L-$functions. The main goal of our paper is to prove the following result. 

\begin{theorem}\label{theorem uniform distribution} Let $f\in S_{k+\frac{1}{2}}(\Gamma_{0}(4))$ be an eigenform for all
Hecke operators $\mathcal{T}_{n^{2}}$ and the Fricke operator $W_{4}$
and let $L(s,f)$ be the associated $L-$function. Then the imaginary
parts of the nontrivial zeros of $L(s,f)$ are uniformly distributed
modulo one.
\end{theorem}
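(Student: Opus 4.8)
The plan is to verify Weyl's criterion \eqref{to prove Weyl criterion} for the sequence of ordinates $(\gamma_n)$, i.e.\ to show that $\sum_{0<\gamma\le T} e^{2\pi i m \gamma} = \mathrm{o}(N(T))$ for every nonzero integer $m$, where $N(T)$ is the zero-counting function. Following the Rademacher--Elliott--Hlawka--Akbary--Murty template, I would start from the Riemann--von Mangoldt-type formula $N(T) \sim \frac{T}{\pi}\log T$ (obtainable from the functional equation \eqref{functional equation iwht l} and a standard contour/argument-principle count; the Gamma factors are $\pi^{-s}\Gamma(s)$, so the density matches that of $\zeta$ up to the known constant), and then evaluate the exponential sum via a contour integral of $\frac{\xi'}{\xi}(s)$, where $\xi(s,f) = \pi^{-s}\Gamma(s)L(s,f)$ is the completed function. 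Concretely, with $\sigma_0 = \frac{k}{2}+\frac14$ the center of symmetry, one writes
\begin{equation}
\sum_{0<\gamma\le T} x^{i\gamma} = \frac{1}{2\pi i}\oint x^{s-\sigma_0}\,\frac{L'}{L}(s,f)\,ds + (\text{Gamma-factor contribution}),
\end{equation}
with $x = e^{2\pi m}$ (or $e^{-2\pi m}$, choosing the sign so $|x|$ is convenient), integrating around a rectangle with vertical sides at $\mathrm{Re}(s) = \sigma_0 \pm a$ for a fixed $a > \frac{k}{2}+\frac14$ large enough to be in the region of absolute convergence, and horizontal sides at height $0$ and $T$.

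The key steps, in order: (1) establish the zero-counting asymptotic $N(T) = \frac{T}{\pi}\log\frac{T}{2\pi e} + \mathrm{O}(\log T)$ from \eqref{functional equation iwht l}; (2) on the right edge $\mathrm{Re}(s) = \sigma_0 + a$, expand $\frac{L'}{L}(s,f) = -\sum_n \Lambda_f(n) n^{-s}$ as a Dirichlet series — here is where the half-integral-weight features matter, since $L(s,f)$ has an Euler product only over the ``square'' part after Shimura lift considerations, but for the Weyl sum all that is needed is that $\frac{L'}{L}$ has a Dirichlet series with coefficients supported away from $1$ and of polynomial growth, so that $\int_0^T x^{\sigma_0+a-s}\,\frac{L'}{L}(\sigma_0+a+it,f)\,dt$ contributes only $\mathrm{O}(T^{1-\delta})$ or $\mathrm{o}(N(T))$ after choosing $|x|$ appropriately relative to $a$; (3) on the left edge, use the functional equation \eqref{functional equation iwht l} to reflect $s \mapsto k+\frac12 - s$, turning $\frac{L'}{L}(s,f)$ on $\mathrm{Re}(s) = \sigma_0 - a$ into (minus) the reflected Dirichlet series plus the logarithmic derivative of the Gamma and $\pi^{-s}$ factors; the Dirichlet-series part is again $\mathrm{o}(N(T))$, while the Gamma-factor part is the dominant explicit term; (4) evaluate the Gamma-factor contribution $\frac{1}{2\pi i}\int x^{s-\sigma_0}\frac{d}{ds}\log\big(\pi^{-(k+\frac12-s)}\Gamma(k+\frac12-s)\big)\,ds$ by Stirling, which produces a main term of size $\asymp N(T)/\log|x|$ times an oscillating factor like $x^{iT}\log T$ or similar — crucially this is $\mathrm{o}(N(T))$ because it lacks the full $T\log T$ mass; and (5) bound the two horizontal segments by $\mathrm{O}(\log^2 T)$ using standard estimates $\frac{L'}{L}(\sigma+it,f) \ll \log t$ away from zeros, choosing the heights $T$ to avoid zeros (with error $\mathrm{O}(\log T)$ per unit interval from zero density). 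Assembling these, $\sum_{0<\gamma\le T} e^{2\pi i m\gamma} = \mathrm{o}(N(T))$, which is \eqref{to prove Weyl criterion}, and Theorem~\ref{theorem uniform distribution} follows.

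The main obstacle I anticipate is step (2)/(3): controlling the Dirichlet-series edge contributions for a function \emph{without} a standard Euler product. For $\zeta$ and Selberg-class functions one uses $\frac{\zeta'}{\zeta} = -\sum \Lambda(n)n^{-s}$ with $\Lambda$ supported on prime powers, which makes the oscillatory integral over $t$ decay; for $L(s,f)$ attached to a half-integral weight Hecke eigenform, the Hecke relations only control coefficients $a_f(n^2)$, and the Shimura correspondence relates $L(s,f)$ to an integral-weight $L$-function with a shifted argument, so one must verify that $\frac{L'}{L}(s,f)$ still admits a Dirichlet expansion $\sum_n c_f(n) n^{-s}$ with $c_f(1) = 0$ and $c_f(n) \ll n^{\epsilon}$ (or at worst $n^{\theta}$ with $\theta$ small, coming from bounds on $a_f(n)$ such as the Kohnen--Zagier / Iwaniec bound). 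If the coefficients $a_f(n)$ satisfy the Ramanujan-type bound $a_f(n) \ll_\epsilon n^{k/2-1/4+\epsilon}$ on average (or individually, using known results in this specific case where $f = \theta^{-3}\eta(2z)^{12}$ is explicit), then the argument goes through; otherwise one needs a mean-value input on $\int_0^T |L(\sigma_0+a+it,f)|^2\,dt$ or on $\sum_{n\le x}|a_f(n)|^2$ to push the error below $N(T)$. I would handle this by working in the half-plane of absolute convergence where the crude bound $\frac{L'}{L}(\sigma+it,f) = \mathrm{O}(1)$ holds uniformly, making the vertical integral trivially $\mathrm{O}(T) = \mathrm{o}(T\log T)$, at the cost of a slightly larger but still negligible error — this sidesteps the lack of Euler product entirely, since absolute convergence is all one needs on that edge.
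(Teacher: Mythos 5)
There is a genuine gap at the very first step, and it is precisely the point that the bulk of the paper is devoted to. Your contour integral of $x^{s-\sigma_{0}}\,\frac{L'}{L}(s,f)$ (or of the completed function) produces, by the residue theorem, the sum $\sum_{\rho}x^{\rho-\sigma_{0}}=\sum_{\rho}x^{\beta-\sigma_{0}}x^{i\gamma}$, \emph{not} the Weyl sum $\sum_{\rho}x^{i\gamma}$. These two sums coincide only if every nontrivial zero lies on the line $\mathrm{Re}(s)=\sigma_{0}=\frac{k}{2}+\frac{1}{4}$, and for this class of $L$-functions that is provably false: Yoshida exhibited zeros of $L(s,g)$ off the critical line. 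To bridge the two sums one must show
\begin{equation*}
\sum_{|\gamma|<T}\Bigl|\beta-\tfrac{k}{2}-\tfrac{1}{4}\Bigr|=\mathrm{o}\bigl(N_{f}(T)\bigr),
\end{equation*}
since $|e^{2\pi i m\gamma}-e^{-2\pi m\sigma_{0}}e^{2\pi m\rho}|\ll_{m}|\beta-\sigma_{0}|$. This density estimate is the heart of the matter: the paper obtains it from Littlewood's lemma applied to $\log L(s,f)$ on a rectangle flanking the critical line, which in turn requires the second-moment bound $\int_{0}^{T}|L(\frac{k}{2}+\frac{1}{4}+it,f)|^{2}\,dt\ll T\log T$ of Lemma \ref{ramachandra mean} --- and proving that bound (via Ramachandra's approximate functional equation, the Lau--Royer--Wu estimate for $\sum_{n\le x}|a_{f}(n)|^{2}$, and Montgomery--Vaughan) occupies most of the paper. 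Your proposal mentions a possible ``mean-value input'' only in connection with controlling the Dirichlet-series edges of the contour, never in connection with the off-line zeros, so the decisive step is missing. This is the same reason Rademacher's original result for $\zeta$ was conditional on RH.

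By contrast, your worry about the absence of an Euler product is not a real obstacle, and your proposed resolution is essentially what the paper does: on the right edge one works in the half-plane of absolute convergence, where (after normalizing by the first nonzero term $a_{f}(c)c^{-s}$) the reciprocal $1/\tilde{L}(s,f)$ expands as an absolutely convergent generalized Dirichlet series via the geometric series, so $\frac{L'}{L}$ has an expansion $\sum_{n}b_{f}(n)\lambda_{n}^{-s}$ over real frequencies $\lambda_{n}\ge c$; no Ramanujan-type bound on individual coefficients is needed. The resulting Landau-type formula \eqref{formula Landau explicit} gives $\sum_{|\gamma|<T}x^{\rho}=\mathrm{O}(T)=\mathrm{o}(T\log T)$ for $x=e^{2\pi m}$, which is all the explicit-formula part contributes. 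So the architecture you sketch handles the second summand of the paper's decomposition correctly, but without the density hypothesis the argument does not reach the Weyl sum itself.
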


\bigskip{}

We know that $\text{dim}\,S_{\frac{9}{2}}\left(\Gamma_{0}(4)\right)=1$
{[}\cite{shimura_half}, p. 477{]}, so that $g(z)$ given by (\ref{cusp form 9/2 intro})
is an eigenform for all the Hecke operators and satisfies $\left(g|W_{4}\right)(z)=g(z)$.
Thus, the zero ordinates of $L(s,g)$ are uniformly distributed modulo one, which justifies the choice of the title for this paper. 

\begin{corollary}
Let $L(s,g)$ be the $L-$function attached to the cusp form $g(z)=\theta(z)^{-3}\eta(2z)^{12}\in S_{\frac{9}{2}}\left(\Gamma_{0}(4)\right)$.
Then the ordinates of the nontrivial zeros of $L(s,g)$ are uniformly
distributed modulo one.
\end{corollary}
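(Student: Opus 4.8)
\section*{Proof proposal}

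The plan is to verify Weyl's criterion (\ref{to prove Weyl criterion}) for the sequence $(\gamma_{n})_{n\in\mathbb N}$ of positive ordinates of the nontrivial zeros of $L(s,f)$, listed in increasing order and repeated with multiplicity. Since $\sum_{0<\gamma\le T}e^{-2\pi im\gamma}$ is the complex conjugate of $\sum_{0<\gamma\le T}e^{2\pi im\gamma}$, it is enough to handle $m\in\mathbb N$; writing $N(T)=\#\{\rho=\beta+i\gamma:\ L(\rho,f)=0,\ 0<\gamma\le T\}$ with multiplicity, the argument principle together with (\ref{functional equation iwht l}) yields a Riemann--von Mangoldt type asymptotic $N(T)\asymp T\log T$, so the goal becomes $\sum_{0<\gamma\le T}e^{2\pi im\gamma}=\mathrm{o}(T\log T)$ for each fixed $m\in\mathbb N$. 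Fix such an $m$ and put $x:=e^{2\pi m}>1$, so that $x^{i\gamma}=e^{2\pi im\gamma}$. The number $x$ is transcendental --- $e^{\pi}$ is transcendental by Gelfond--Schneider, hence so is $x=(e^{\pi})^{2m}$ --- and in particular $x\notin\mathbb N$.

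The first main step is a Landau-type explicit formula for $\sum_{0<\gamma\le T}m_{\rho}\,x^{\rho}$. Write $-\frac{L'}{L}(s,f)=\sum_{n\ge1}\Lambda_{f}(n)\,n^{-s}$ for $\mathrm{Re}(s)$ beyond the abscissa of absolute convergence $\sigma_{a}$ of $L(s,f)$; the Rankin--Selberg bound $\sum_{n\le X}|a_{f}(n)|^{2}\ll X^{k+1/2}$ gives $\sigma_{a}\le\frac{k}{2}+\frac{3}{4}<\infty$ (note that $\Lambda_{f}$ need not be supported on prime powers, as there is no Euler product). Integrating $\frac{L'}{L}(s,f)\,x^{s}$ around a positively oriented rectangle with vertical sides $\mathrm{Re}(s)=b$ and $\mathrm{Re}(s)=k+\tfrac12-b$, for a large fixed $b$ (so that all nontrivial zeros lie in the strip $k+\tfrac12-b\le\beta\le b$), and with horizontal sides at a small fixed height and at a well-chosen height $\asymp T$, the residue theorem produces $\sum_{0<\gamma\le T}m_{\rho}\,x^{\rho}$. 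On the right edge one uses the Dirichlet series above; on the left edge logarithmic differentiation of (\ref{functional equation iwht l}) rewrites $\frac{L'}{L}(k+\tfrac12-b+it,f)$ as a constant, two digamma values, and $-\frac{L'}{L}(b-it,f)$, and since $x^{it}$ oscillates an integration by parts bounds that edge by $\mathrm{O}_{m}(\log^{2}T)$; on the top edge one invokes the standard estimate $\frac{L'}{L}(\sigma+it,f)\ll\log^{2}t$ valid at suitable heights $t\asymp T$ (resting on $L(s,f)$ being of finite order in the strip, via Phragm\'en--Lindel\"of between the two vertical lines, together with the local count $N(t+1)-N(t)\ll\log t$). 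The upshot is
\begin{equation*}
\sum_{0<\gamma\le T}m_{\rho}\,x^{\rho}=-\frac{T}{2\pi}\,\Lambda_{f}(x)+\mathrm{O}_{m}(\log^{2}T),
\end{equation*}
where $\Lambda_{f}(x)$ is understood to vanish for $x\notin\mathbb N$. As $x\notin\mathbb N$, the main term is absent and $\sum_{0<\gamma\le T}m_{\rho}\,x^{\rho}=\mathrm{O}_{m}(\log^{2}T)=\mathrm{o}(T)$.

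It remains to pass from $x^{\rho}$ to $x^{i\gamma}$. Let $\sigma_{0}=\frac{k}{2}+\frac{1}{4}$ be the centre of the functional equation (\ref{functional equation iwht l}) and write $\rho=\beta+i\gamma$. Since the nontrivial zeros lie in a fixed vertical strip, $|x^{\beta-\sigma_{0}}-1|\ll_{m}|\beta-\sigma_{0}|$, and from $x^{i\gamma}-x^{-\sigma_{0}}x^{\rho}=x^{i\gamma}\bigl(1-x^{\beta-\sigma_{0}}\bigr)$ we get
\begin{equation*}
\Bigl|\sum_{0<\gamma\le T}m_{\rho}\,x^{i\gamma}\Bigr|\ll_{m}\Bigl|\sum_{0<\gamma\le T}m_{\rho}\,x^{\rho}\Bigr|+\sum_{0<\gamma\le T}m_{\rho}\,|\beta-\sigma_{0}|=\mathrm{o}(T)+\sum_{0<\gamma\le T}m_{\rho}\,|\beta-\sigma_{0}|,
\end{equation*}
so the theorem reduces to $\sum_{0<\gamma\le T}m_{\rho}\,|\beta-\sigma_{0}|=\mathrm{o}(T\log T)$. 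I would split this sum according to $\beta>\sigma_{0}$ and $\beta<\sigma_{0}$; by (\ref{functional equation iwht l}) the zeros with $\beta<\sigma_{0}$, $0<\gamma\le T$, correspond bijectively (preserving multiplicity, via $\rho\mapsto k+\tfrac12-\rho$) to zeros with real part $>\sigma_{0}$ and ordinate in $[-T,0)$, so each piece is handled by Littlewood's lemma applied to $L(s,f)$ on the rectangles $[\sigma_{0},b]\times[0,T]$ and $[\sigma_{0},b]\times[-T,0]$. On the side $\mathrm{Re}(s)=b$ the function $L(s,f)$ is bounded and bounded away from $0$, so that integral is $\mathrm{O}(T)$; the argument-variation terms are $\mathrm{O}(\log T)$; and concavity of $\log$ together with the second-moment bound $\int_{0}^{T}|L(\sigma_{0}+it,f)|^{2}\,dt\ll T\log T$ (which follows from the approximate functional equation derived from (\ref{functional equation iwht l}) and the Rankin--Selberg bound above) gives $\int_{0}^{T}\log|L(\sigma_{0}+it,f)|\,dt\le\frac{T}{2}\log\!\bigl(\frac{1}{T}\int_{0}^{T}|L(\sigma_{0}+it,f)|^{2}\,dt\bigr)\ll T\log\log T$. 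Hence $\sum_{0<\gamma\le T}m_{\rho}\,|\beta-\sigma_{0}|\ll T\log\log T=\mathrm{o}(T\log T)$, therefore $\sum_{0<\gamma\le T}e^{2\pi im\gamma}=\mathrm{o}(T\log T)=\mathrm{o}(N(T))$, and Weyl's criterion establishes Theorem~\ref{theorem uniform distribution}. The Corollary is then immediate: $\dim S_{\frac{9}{2}}\left(\Gamma_{0}(4)\right)=1$ forces $g(z)=\theta(z)^{-3}\eta(2z)^{12}$ to be an eigenform for all $\mathcal{T}_{n^{2}}$ and for $W_{4}$.

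I expect the principal obstacle to be the Landau-type formula of the second step: each ingredient (finite order and Phragm\'en--Lindel\"of bounds for $L(s,f)$ in the critical strip, the local bound $N(t+1)-N(t)\ll\log t$, and the $\ll\log^{2}t$ estimate for $\frac{L'}{L}$ at well-chosen heights) is classical in spirit, but assembling them carefully for an $L$-function with no Euler product --- so that the only surviving main term is the harmless $-\frac{T}{2\pi}\Lambda_{f}(x)$, which vanishes because $e^{2\pi m}\notin\mathbb N$ --- is the technical core of the argument. A close second is the critical-line second moment: it is standard via the approximate functional equation, but one should arrange that only the average bound $\sum_{n\le X}|a_{f}(n)|^{2}\ll X^{k+1/2}$, and not the more delicate pointwise behaviour of the half-integral weight coefficients $a_{f}(n)$, is needed.
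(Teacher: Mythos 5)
Your argument is correct and follows essentially the same route as the paper: the Corollary is deduced, exactly as in the text, from $\dim S_{\frac{9}{2}}(\Gamma_{0}(4))=1$ together with the main theorem, and your proof of the main theorem --- a Landau-type explicit formula on a rectangle, the reduction of $x^{i\gamma}$ to $x^{\rho}$ at the cost of $\sum_{\gamma}|\beta-\sigma_{0}|$, and Littlewood's lemma combined with Jensen's inequality and the second-moment bound $\int_{0}^{T}|L(\sigma_{0}+it,f)|^{2}\,dt\ll T\log T$ --- is precisely the paper's Proposition \ref{landau proposition} and Section 4. The only cosmetic differences are that the paper normalizes to $\tilde{L}(s,f)=c^{s}L(s,f)/a_{f}(c)$ so that the Dirichlet series for $\tilde{L}'/\tilde{L}$ has frequencies $\lambda_{n}$ that need not be integers (your integer-supported $\Lambda_{f}$ tacitly assumes $a_{f}(1)\neq 0$, which does hold for $g$), and that the transcendence of $e^{2\pi m}$ is not actually needed, since the possible main term is $O(T)$ and is absorbed into $o(N_{f}(T))=o(T\log T)$ in any case.
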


\bigskip{}

The proof of our main result relies on the application of Landau's
lemma, together with an explicit proof of a density hypothesis on
the zeros of $L(s,f)$ (cf. \cite{akbary_murty}). Our paper is organized
as follows. In the next section, we revise some important lemmas about
the Fourier coefficients of the cusp forms satisfying the conditions of our Theorem \ref{theorem uniform distribution}. The most important lemma
of our next section is a result proved by Lau, Royer and Wu in \cite{lau},
which consists in an application of the Rankin-Selberg method to
study mean values of the coefficients of half-integral weight cusp
forms. Next, based on a beautiful method of Ramachandra \cite{ramachandra_montgomery,ivic,padma_thesis, Kanemitsu_Sanka}, we study the integral
\[
\intop_{0}^{T}\left|L\left(\frac{k}{2}+\frac{1}{4}+it,f\right)\right|^{2}dt
\]
as $T\rightarrow\infty$. A suitable upper bound for this integral
will prove to be essential for our establishment of a density hypothesis
for the nontrivial zeros of $L(s,f)$. Although we only employ the standard machinery, as far as we know there are no explicit results regarding mean values of $L-$functions attached to cusp forms of half-integral weight. 

Finally, we should mention that it is quite natural that our main result holds for any half-integral weight cusp form in $\Gamma_{0}(4N)$ and, most likely, the powerful method employed by M\"uller \cite{muller} could work in establishing a general analogue of our Theorem \ref{theorem uniform distribution}.\footnote{For example, M\"uller's method can be used to establish, similarly to Schmeller's result \cite{schmeller}, that the zero ordinates of the Siegel zeta functions attached to binary indefinite quadratic forms are uniformly distributed modulo one.} However, we leave this general case open for further explorations.

\section{Preliminary results}

\subsection{Properties of a subclass of cusp forms}

In this section we state and prove some important lemmas about the class of $L-$functions considered in our main Theorem. We start with the following lemma, which is given in [\cite{lau}, page 872, Proposition 7]. 

\begin{lemma}
Let $f\in S_{k+\frac{1}{2}}\left(\Gamma_{0}(4)\right)$ and assume
that $f$ is an eigenform for all the Hecke operators $\mathcal{T}_{n^{2}}$.
Then the Dirichlet series
\begin{equation}
\mathscr{D}\left(f\otimes\overline{f},s\right):=\sum_{n=1}^{\infty}\frac{|a_{f}(n)|^{2}}{n^{s}}
\end{equation}
converges absolutely for $\text{Re}(s)>k+\frac{1}{2}$. Moreover,
it can be analytically continued in the half-plane $\text{Re}(s)>k$ as a meromorphic function, 
with its only (simple) pole being located at $s=k+\frac{1}{2}$.
\end{lemma}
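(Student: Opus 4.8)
The plan is to run the classical Rankin--Selberg unfolding; the only point to watch is the half-integral multiplier, but since $f$ obeys (\ref{modular properties half integral weight}) that multiplier has modulus one, so for $\kappa:=k+\frac{1}{2}$ the function $\phi(z):=y^{\kappa}|f(z)|^{2}$ is a genuine $\Gamma_{0}(4)$-invariant function and one may integrate against the \emph{ordinary} (trivial-multiplier) real-analytic Eisenstein series. Write $d\mu(z)=y^{-2}\,dx\,dy$ and let $E_{\infty}(z,s)=\sum_{\gamma\in\Gamma_{\infty}\backslash\Gamma_{0}(4)}\big(\text{Im}\,\gamma z\big)^{s}$ be the Eisenstein series attached to the cusp $\infty$ of $\Gamma_{0}(4)$, convergent for $\text{Re}(s)>1$. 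First I would consider
\[
I(s)\;=\;\int_{\Gamma_{0}(4)\backslash\mathbb{H}}\phi(z)\,E_{\infty}(z,s)\,d\mu(z),
\]
which converges absolutely for $\text{Re}(s)>1$: $\phi$ is continuous, bounded on $\Gamma_{0}(4)\backslash\mathbb{H}$ and decays exponentially at every cusp (as $f$ is cuspidal), whereas $E_{\infty}(z,s)$ grows at most polynomially there. Unfolding against the cusp $\infty$ (of width $1$) and applying Parseval to the $x$-integral via (\ref{Fourier expansion of cusp intro}) yields
\begin{align*}
I(s)&=\int_{0}^{\infty}\!\!\int_{0}^{1}y^{\kappa+s-2}|f(x+iy)|^{2}\,dx\,dy=\sum_{n=1}^{\infty}|a_{f}(n)|^{2}\int_{0}^{\infty}y^{\kappa+s-2}e^{-4\pi n y}\,dy\\
&=\frac{\Gamma(\kappa+s-1)}{(4\pi)^{\kappa+s-1}}\,\mathscr{D}\!\left(f\otimes\overline{f},\kappa+s-1\right).
\end{align*}

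Since every term of the last sum is nonnegative for real $s$, Tonelli's theorem shows that the series $\mathscr{D}(f\otimes\overline{f},w)$ with $w=\kappa+s-1$ converges precisely when $I(s)<\infty$; taking $\text{Re}(s)>1$, i.e.\ $\text{Re}(w)>\kappa=k+\frac{1}{2}$, gives the claimed abscissa of absolute convergence and the integral representation $\mathscr{D}(f\otimes\overline{f},w)=\frac{(4\pi)^{w}}{\Gamma(w)}\,I(w-\kappa+1)$ in that range. The continuation is then inherited from that of $E_{\infty}(z,s)$: it is classical that $E_{\infty}(z,s)$ extends meromorphically to $\mathbb{C}$ and that in the half-plane $\text{Re}(s)>\frac{1}{2}$ its only singularity is a simple pole at $s=1$ with residue $1/\text{Vol}(\Gamma_{0}(4)\backslash\mathbb{H})$, independent of $z$. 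Because $(4\pi)^{w}/\Gamma(w)$ is holomorphic and nonvanishing for $\text{Re}(w)>k$, it follows that $\mathscr{D}(f\otimes\overline{f},w)$ continues meromorphically to $\text{Re}(w)>k$ with a single, simple pole at $w=k+\frac{1}{2}$ (the image of $s=1$), of residue
\[
\frac{(4\pi)^{k+\frac{1}{2}}}{\Gamma\!\left(k+\frac{1}{2}\right)\text{Vol}\!\left(\Gamma_{0}(4)\backslash\mathbb{H}\right)}\,\|f\|^{2}\neq 0,
\]
where $\|f\|^{2}=\int_{\Gamma_{0}(4)\backslash\mathbb{H}}y^{\kappa}|f(z)|^{2}\,d\mu(z)$ is the Petersson norm; to upgrade ``pointwise defined'' to ``meromorphic'' one feeds the standard vertical-strip bounds for $E_{\infty}(z,s)$ against the exponential decay of $\phi$. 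In fact, once this pole is in hand, Landau's theorem on Dirichlet series with nonnegative coefficients shows the abscissa of absolute convergence of $\mathscr{D}(f\otimes\overline{f},\cdot)$ equals \emph{exactly} $k+\frac{1}{2}$.

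I expect the only real work—rather than any genuine difficulty—to be the bookkeeping at the several cusps of $\Gamma_{0}(4)$: one must confirm that the unfolding at $\infty$ is legitimate and that the Eisenstein series for $\Gamma_{0}(4)$ has no further pole in $\text{Re}(s)>\frac{1}{2}$, both of which are standard facts about Eisenstein series for congruence subgroups. Since this statement is precisely [\cite{lau}, Proposition~7], one may also simply quote it; I have nonetheless indicated the argument because the unfolded identity above and the location of the pole at $w=k+\frac{1}{2}$ are exactly what we shall use in Section~3 to control the diagonal contribution in the mean value of $L(s,f)$ on the critical line.
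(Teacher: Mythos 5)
Your argument is correct, but it is worth pointing out that the paper does not prove this lemma at all: it is quoted verbatim from [\cite{lau}, Proposition 7], so there is no internal proof to compare against. What you have written is essentially the standard Rankin--Selberg proof of that quoted result: the observation that the half-integral multiplier in (\ref{modular properties half integral weight}) has modulus one (so $y^{k+\frac{1}{2}}|f(z)|^{2}$ is genuinely $\Gamma_{0}(4)$-invariant), the unfolding against $E_{\infty}(z,s)$, and the change of variable $w=\kappa+s-1$ sending the pole at $s=1$ to $w=k+\frac{1}{2}$ and the half-plane $\text{Re}(s)>\frac{1}{2}$ to $\text{Re}(w)>k$ are all in order, and the residue you obtain is exactly the constant $r_{f}$ that the paper later feeds into (\ref{mean discrete}). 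The one analytic input you should not wave at too quickly is the assertion that $E_{\infty}(z,s)$ for $\Gamma_{0}(4)$ has no pole in $\frac{1}{2}<\text{Re}(s)<1$: this is where the half-plane $\text{Re}(s)>k$ in the statement actually comes from, and it requires knowing that the residual spectrum of $\Gamma_{0}(4)$ consists of the constants alone (e.g.\ via the explicit scattering matrix in terms of Dirichlet $L$-functions). Two further remarks: your proof never uses the hypothesis that $f$ is a Hecke eigenform, so it establishes the statement in greater generality than claimed (the eigenform condition matters elsewhere in [\cite{lau}], not here); and your self-contained derivation has the advantage of exhibiting $r_{f}=\frac{(4\pi)^{k+\frac{1}{2}}}{\Gamma(k+\frac{1}{2})\,\mathrm{Vol}(\Gamma_{0}(4)\backslash\mathbb{H})}\|f\|^{2}>0$ explicitly, which the paper only ever refers to abstractly as ``the residue''.
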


A consequence of the previous lemma, combined with a strong result
of Conrey and Iwaniec \cite{conrey_iwaniec} (cf.  {[}\cite{lau}, p. 877, eq. (14){]}), is that the
mean value estimate takes place
\begin{equation}
\sum_{n\leq x}|a_{f}(n)|^{2}=\frac{2r_{f}}{2k+1}\,x^{k+\frac{1}{2}}+O\left(x^{k+\frac{5}{12}+\epsilon}\right),\label{mean discrete}
\end{equation}
where $r_{f}$ is the residue of $\mathscr{D}\left(f\otimes\overline{f},s\right)$
at $s=k+\frac{1}{2}$. Of course, for the purposes of our Theorem \ref{theorem uniform distribution},
it is enough that $\sum_{n\leq x}|a_{f}(n)|^{2}\ll x^{k+\frac{1}{2}}$.   

\bigskip{}

When combined with the Cauchy-Schwarz inequality (cf. {[}\cite{meher_srinivas},
p. 929, Proposition 2.3{]}), the previous lemma establishes that,
for any $f\in S_{k+\frac{1}{2}}\left(\Gamma_{0}(4)\right)$ which
is an eigenform for all Hecke operators $\mathcal{T}_{n^{2}}$, the
$L-$function
\[
L(s,f)=\sum_{n=1}^{\infty}\frac{a_{f}(n)}{n^{s}},
\]
converges absolutely when $\text{Re}(s)>\frac{k}{2}+\frac{3}{4}$.

\bigskip{}

By following a method of Ramachandra \cite{ivic,ramachandra_montgomery},
the next couple of lemmas are devoted to prove a mean value estimate for $L\left(s,f\right)$
on the critical line $\text{Re}(s)=\frac{k}{2}+\frac{1}{4}$. We start with a result that allows to write an approximate functional equation for $L(s,f)$ that is helpful in the study of such quantities. 

\begin{lemma}\label{approximate functional equation}

Let $f\in S_{k+\frac{1}{2}}(\Gamma_{0}(4))$ be an eigenform for all
Hecke operators $\mathcal{T}_{n^{2}}$ and the Fricke operator $W_{4}$, i.e.,  $\left(f|W_{4}\right)(z)=(-1)^{\ell}f(z), \ell=0,1.$

Write $s=\sigma+it$ and assume that $\frac{k}{2}+\frac{1}{4}\leq\sigma\leq\frac{k}{2}+\frac{3}{4}$.
Then the following approximate functional equation holds
\begin{align}
L(s,f) & =\sum_{n\leq x}\frac{a_{f}(n)}{n^{s}}+\sum_{n\leq x}\frac{a_{f}(n)}{n^{s}}\left(e^{-\left(n/x\right)^{h}}-1\right)\nonumber \\
 & +\sum_{n>x}\frac{a_{f}(n)}{n^{s}}e^{-\left(n/x\right)^{h}}+(-1)^{\ell}\pi^{2s-k-\frac{1}{2}}\frac{\Gamma\left(k+\frac{1}{2}-s\right)}{\Gamma(s)}\,\sum_{n\leq x}\frac{a_{f}(n)}{n^{k+\frac{1}{2}-s}}\nonumber \\
 & -\frac{(-1)^{\ell}\pi^{2s-k-\frac{1}{2}}}{2\pi i\,h}\intop_{-\eta-i\infty}^{-\eta+i\infty}\frac{\Gamma\left(k+\frac{1}{2}-s-w\right)\Gamma\left(\frac{w}{h}\right)}{\Gamma\left(s+w\right)}\sum_{n>x}\frac{a_{f}(n)}{n^{k+\frac{1}{2}-s-w}}\left(\pi^{2}x\right)^{w}dw\nonumber \\
 & -\frac{(-1)^{\ell}\pi^{2s-k-\frac{1}{2}}}{2\pi i\,h}\intop_{\alpha-i\infty}^{\alpha+i\infty}\frac{\Gamma\left(k+\frac{1}{2}-s-w\right)\Gamma\left(\frac{w}{h}\right)}{\Gamma\left(s+w\right)}\sum_{n\leq x}\frac{a_{f}(n)}{n^{k+\frac{1}{2}-s-w}}\left(\pi^{2}x\right)^{w}dw,\label{Reflection principle}
\end{align}
for $0<\alpha<\frac{k}{2}+\frac{1}{4}$, $h=1,2$ and $\eta>\sigma-\frac{k}{2}+\frac{1}{4}$.
Moreover, one has the truncated formula
\begin{align}
L(s,f) & =\sum_{n\leq x}\frac{a_{f}(n)}{n^{s}}+\sum_{n\leq x}\frac{a_{f}(n)}{n^{s}}\left(e^{-\left(n/x\right)^{h}}-1\right)\nonumber \\
 & +\sum_{n>x}\frac{a_{f}(n)}{n^{s}}e^{-\left(n/x\right)^{h}}+(-1)^{\ell}\pi^{2s-k-\frac{1}{2}}\frac{\Gamma\left(k+\frac{1}{2}-s\right)}{\Gamma(s)}\,\sum_{n\leq x}\frac{a_{f}(n)}{n^{k+\frac{1}{2}-s}}\nonumber \\
 & -\frac{(-1)^{\ell}\pi^{2s-k-\frac{1}{2}}}{2\pi i\,h}\intop_{-\eta-i\log^{2}|t|}^{-\eta+i\log^{2}|t|}\frac{\Gamma\left(k+\frac{1}{2}-s-w\right)\Gamma\left(\frac{w}{h}\right)}{\Gamma\left(s+w\right)}\sum_{n>x}\frac{a_{f}(n)}{n^{k+\frac{1}{2}-s-w}}\left(\pi^{2}x\right)^{w}dw\nonumber \\
 & -\frac{(-1)^{\ell}\pi^{2s-k-\frac{1}{2}}}{2\pi i\,h}\intop_{\alpha-i\log^{2}|t|}^{\alpha+i\log^{2}|t|}\frac{\Gamma\left(k+\frac{1}{2}-s-w\right)\Gamma\left(\frac{w}{h}\right)}{\Gamma\left(s+w\right)}\sum_{n\leq x}\frac{a_{f}(n)}{n^{k+\frac{1}{2}-s-w}}\left(\pi^{2}x\right)^{w}dw\nonumber \\
 & +O\left(|t|^{-A}\right),\label{Truncated Reflection principle}
\end{align}
where $A$ is an arbitrarily large positive constant.
\end{lemma}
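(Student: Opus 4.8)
The plan is to derive the exact identity (\ref{Reflection principle}) by Ramachandra's Mellin--Barnes smoothing device, and then to obtain the truncated form (\ref{Truncated Reflection principle}) from it by cutting the two vertical integrals.

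\textbf{Mellin representation.} I would start from
\begin{equation*}
\frac{1}{2\pi i\,h}\int_{(c)}\Gamma\!\left(\frac{w}{h}\right)y^{-w}\,dw=e^{-y^{h}}\qquad (c>0),
\end{equation*}
which follows from $\int_{0}^{\infty}e^{-u^{h}}u^{w-1}\,du=\frac{1}{h}\Gamma(w/h)$ together with Mellin inversion. Putting $y=n/x$, multiplying by $a_{f}(n)n^{-s}$ and summing over $n$, with $c$ chosen so that $\sigma+c>\frac{k}{2}+\frac{3}{4}$ (where $L(s,f)$ converges absolutely, as established earlier in this section), one may interchange the sum and the integral by absolute convergence, using Stirling's bound for $\Gamma(w/h)$ on the vertical line, obtaining
\begin{equation*}
\sum_{n=1}^{\infty}\frac{a_{f}(n)}{n^{s}}\,e^{-(n/x)^{h}}=\frac{1}{2\pi i\,h}\int_{(c)}\Gamma\!\left(\frac{w}{h}\right)L(s+w,f)\,x^{w}\,dw.
\end{equation*}
Writing the left-hand side as $\sum_{n\le x}+\sum_{n\le x}\bigl(e^{-(n/x)^{h}}-1\bigr)+\sum_{n>x}e^{-(n/x)^{h}}$ already accounts for the first three terms on the right of (\ref{Reflection principle}).

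\textbf{Two contour shifts.} Next I would move the line of integration from $\text{Re}(w)=c$ to $\text{Re}(w)=-\eta$. Since $L(\,\cdot\,,f)$ is entire, the only singularity crossed is the simple pole of $\Gamma(w/h)$ at $w=0$, whose residue contributes exactly $L(s,f)$; the horizontal sides of the rectangle are negligible because $\Gamma(w/h)$ decays exponentially in $\text{Im}(w)$ while $L(s+w,f)$ grows only polynomially on vertical lines. On the line $\text{Re}(w)=-\eta$ I would apply Hecke's functional equation (\ref{functional equation iwht l}) in the form
\begin{equation*}
L(s+w,f)=(-1)^{\ell}\,\pi^{2(s+w)-k-\frac{1}{2}}\,\frac{\Gamma\!\left(k+\frac{1}{2}-s-w\right)}{\Gamma(s+w)}\,L\!\left(k+\frac{1}{2}-s-w,f\right);
\end{equation*}
the hypothesis $\eta>\sigma-\frac{k}{2}+\frac{1}{4}$ guarantees that $\text{Re}\bigl(k+\frac{1}{2}-s-w\bigr)=k+\frac{1}{2}-\sigma+\eta>\frac{k}{2}+\frac{3}{4}$, so $L(k+\frac{1}{2}-s-w,f)$ may be expanded into its absolutely convergent Dirichlet series and then split as $\sum_{n\le x}+\sum_{n>x}$. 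The piece $\sum_{n>x}$ is kept on $\text{Re}(w)=-\eta$ and yields the line of (\ref{Reflection principle}) carrying that tail. In the remaining finite sum $\sum_{n\le x}$ the integrand is meromorphic in $w$ with its only relevant pole again at $w=0$; shifting this integral to $\text{Re}(w)=\alpha$ with $0<\alpha<\frac{k}{2}+\frac{1}{4}$, the residue at $w=0$ reproduces the dual main term $(-1)^{\ell}\pi^{2s-k-\frac{1}{2}}\frac{\Gamma(k+\frac{1}{2}-s)}{\Gamma(s)}\sum_{n\le x}\frac{a_{f}(n)}{n^{k+\frac{1}{2}-s}}$, while the shifted integral becomes the last line of (\ref{Reflection principle}). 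Collecting the pieces and keeping track of the signs produced by the two shifts gives (\ref{Reflection principle}).

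\textbf{Truncation and main obstacle.} To deduce (\ref{Truncated Reflection principle}) I would truncate each of the two vertical integrals to $|\text{Im}(w)|\le\log^{2}|t|$. On the discarded parts Stirling's formula gives $|\Gamma(w/h)|\ll e^{-c|\text{Im}(w)|}\le e^{-c\log^{2}|t|}$ for some $c>0$, while the factors $\Gamma(k+\frac{1}{2}-s-w)/\Gamma(s+w)$, $(\pi^{2}x)^{w}$, and the finite sums or absolutely convergent tails grow at most polynomially in $|t|$ when $x$ is of polynomial size in $|t|$ (as it is in the applications to the mean value); since $e^{-c\log^{2}|t|}=|t|^{-c\log|t|}$ outgrows every fixed power of $|t|$, the truncation error is $O(|t|^{-A})$ for each $A>0$, which is the term displayed in (\ref{Truncated Reflection principle}). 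The Mellin inversion and the algebraic bookkeeping are routine; the only delicate point is the justification of the two contour shifts, i.e.\ that the horizontal edges of the rectangles vanish in the limit. This reduces to a polynomial-in-$|t|$ bound for $L(s+w,f)$ on vertical lines, obtained from Hecke's functional equation, Stirling's formula and a Phragm\'en--Lindel\"of convexity argument, together with the exponential decay of $\Gamma(w/h)$; combined, these dominate the integrand along the horizontal segments.
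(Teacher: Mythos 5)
Your proposal is correct and follows essentially the same route as the paper: the Cahen--Mellin smoothing device, the contour shift past the simple pole of $\Gamma(w/h)$ at $w=0$ (producing $L(s,f)$), the application of Hecke's functional equation on $\mathrm{Re}(w)=-\eta$ with the dual Dirichlet series split at $x$, and the second shift to $\mathrm{Re}(w)=\alpha$ recovering the dual main term. The only cosmetic difference is that the paper truncates the initial Mellin--Barnes integral at $|\mathrm{Im}(w)|=\log^{2}|t|$ before performing the shifts, whereas you truncate the two final vertical integrals; both reduce to the same Stirling estimate and yield the $O(|t|^{-A})$ error.
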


\begin{proof}
Let $c>\frac{k}{2}+\frac{3}{4}$ and use the Cahen-Mellin representation
to write
\begin{align}
\sum_{n=1}^{\infty}\frac{a_{f}(n)}{n^{s}}e^{-\left(n/x\right)^{h}} & =\frac{1}{h}\,\sum_{n=1}^{\infty}\frac{a_{f}(n)}{n^{s}}\,\frac{1}{2\pi i}\intop_{c-i\infty}^{c+i\infty}\Gamma\left(\frac{w}{h}\right)\left(\frac{n}{x}\right)^{-w}dw\nonumber \\
 & =\frac{1}{2\pi ih}\,\intop_{c-i\infty}^{c+i\infty}\Gamma\left(\frac{w}{h}\right)L\left(s+w,f\right)\,x^{w}dw,\label{af(n)ns}
\end{align}
where the interchange of the orders of summation and integration is
justified via absolute convergence (since $c>\frac{k}{2}+\frac{3}{4}$
by hypothesis). We now move the line of integration to $\text{Re}(w)=-\eta$
and integrate along a positively oriented rectangular contour whose
vertices are $\left[c\pm iT,-\eta\pm iT\right]$ and then let $T\rightarrow\infty$.
By the choice of $\eta$, we know that the integrand has a simple
pole located at $w=0$, whose residue is $h\,L(s,f)$. Hence,
\begin{equation}
\sum_{n=1}^{\infty}\frac{a_{f}(n)}{n^{s}}e^{-\left(n/x\right)^{h}}=\frac{1}{2\pi i\,h}\intop_{-\eta-i\infty}^{-\eta+i\infty}L(s+w,f)\,\Gamma\left(\frac{w}{h}\right)\,x^{w}dw+L(s,f).\label{equation to prove at beginning}
\end{equation}
We now use the functional equation for $L(s+w,f)$ and the fact that
$\eta>\sigma-\frac{k}{2}+\frac{1}{4}$, to write the integral along $\text{Re}=-\eta$ in the form
\begin{align}
\frac{1}{2\pi i\,h}\intop_{-\eta-i\infty}^{-\eta+i\infty}L(s+w,f)\,\Gamma\left(\frac{w}{h}\right)\,x^{w}dw\nonumber \\
=\frac{(-1)^{\ell}\pi^{2s-k-\frac{1}{2}}}{2\pi i\,h}\intop_{-\eta-i\infty}^{-\eta+i\infty}\frac{\Gamma\left(\frac{w}{h}\right)\Gamma\left(k+\frac{1}{2}-s-w\right)}{\Gamma\left(s+w\right)}L\left(k+\frac{1}{2}-s-w,f\right)\,\left(\pi^{2}x\right)^{w}dw\nonumber \\
=\frac{(-1)^{\ell}\pi^{2s-k-\frac{1}{2}}}{2\pi i\,h}\intop_{-\eta-i\infty}^{-\eta+i\infty}\frac{\Gamma\left(\frac{w}{h}\right)\Gamma\left(k+\frac{1}{2}-s-w\right)}{\Gamma\left(s+w\right)}\sum_{n=1}^{\infty}\frac{a_{f}(n)}{n^{k+\frac{1}{2}-s-w}}\,\left(\pi^{2}x\right)^{w}dw.\label{reqriting stuuuf}
\end{align}
Next, we truncate the Dirichlet series associated with $L\left(k+\frac{1}{2}-s-w,f\right)$
at the point $x$, which gives the expression
\begin{align*}
\frac{1}{2\pi i\,h}\intop_{-\eta-i\infty}^{-\eta+i\infty}L(s+w,f)\,\Gamma\left(\frac{w}{h}\right)\,x^{w}dw & =\frac{(-1)^{\ell}\pi^{2s-k-\frac{1}{2}}}{2\pi i\,h}\intop_{-\eta-i\infty}^{-\eta+i\infty}\frac{\Gamma\left(\frac{w}{h}\right)\Gamma\left(k+\frac{1}{2}-s-w\right)}{\Gamma\left(s+w\right)}\sum_{n\leq x}\frac{a_{f}(n)}{n^{k+\frac{1}{2}-s-w}}\,\left(\pi^{2}x\right)^{w}dw\\
+\frac{(-1)^{\ell}\pi^{2s-k-\frac{1}{2}}}{2\pi i\,h} & \intop_{-\eta-i\infty}^{-\eta+i\infty}\frac{\Gamma\left(\frac{w}{h}\right)\Gamma\left(k+\frac{1}{2}-s-w\right)}{\Gamma\left(s+w\right)}\sum_{n>x}\frac{a_{f}(n)}{n^{k+\frac{1}{2}-s-w}}\,\left(\pi^{2}x\right)^{w}dw.
\end{align*}

We move once more the line of integration on the first integral from
$\text{Re}(w)=-\eta$ to $\text{Re}(w)=\alpha$, with $0<\alpha<\frac{k}{2}+\frac{1}{4}$.
Doing so, and taking once more into consideration the pole that $\Gamma\left(w/h\right)$
possesses at $w=0$, we find the expression
\begin{align}
\frac{(-1)^{\ell}\pi^{2s-k-\frac{1}{2}}}{2\pi i\,h}\intop_{-\eta-i\infty}^{-\eta+i\infty}\frac{\Gamma\left(\frac{w}{h}\right)\Gamma\left(k+\frac{1}{2}-s-w\right)}{\Gamma\left(s+w\right)}\sum_{n\leq x}\frac{a_{f}(n)}{n^{k+\frac{1}{2}-s-w}}\,\left(\pi^{2}x\right)^{w}dw\nonumber \\
=\frac{(-1)^{\ell}\pi^{2s-k-\frac{1}{2}}}{2\pi i\,h}\intop_{\alpha-i\infty}^{\alpha+i\infty}\frac{\Gamma\left(\frac{w}{h}\right)\Gamma\left(k+\frac{1}{2}-s-w\right)}{\Gamma\left(s+w\right)}\sum_{n\leq x}\frac{a_{f}(n)}{n^{k+\frac{1}{2}-s-w}}\,\left(\pi^{2}x\right)^{w}dw\nonumber \\
+(-1)^{\ell-1}\pi^{2s-k-\frac{1}{2}}\,\frac{\Gamma\left(k+\frac{1}{2}-s\right)}{\Gamma(s)}\sum_{n\leq x}\frac{a_{f}(n)}{n^{k+\frac{1}{2}-s}}.\label{part after shifing to alpha}
\end{align}

Returning to (\ref{equation to prove at beginning}), (\ref{reqriting stuuuf})
and, finally, to (\ref{part after shifing to alpha}) we are able to write the following approximate functional equation
\begin{align*}
L(s,f) & =\sum_{n=1}^{\infty}\frac{a_{f}(n)}{n^{s}}e^{-\left(n/x\right)^{h}}+(-1)^{\ell}\pi^{2s-k-\frac{1}{2}}\frac{\Gamma\left(k+\frac{1}{2}-s\right)}{\Gamma(s)}\sum_{n\leq x}\frac{a_{f}(n)}{n^{k+\frac{1}{2}-s}}\\
 & -\frac{(-1)^{\ell}\pi^{2s-k-\frac{1}{2}}}{2\pi i\,h}\intop_{-\eta-i\infty}^{-\eta+i\infty}\frac{\Gamma\left(\frac{w}{h}\right)\Gamma\left(k+\frac{1}{2}-s-w\right)}{\Gamma\left(s+w\right)}\sum_{n>x}\frac{a_{f}(n)}{n^{k+\frac{1}{2}-s-w}}\,\left(\pi^{2}x\right)^{w}dw\\
 & -\frac{(-1)^{\ell}\pi^{2s-k-\frac{1}{2}}}{2\pi i\,h}\intop_{\alpha-i\infty}^{\alpha+i\infty}\frac{\Gamma\left(\frac{w}{h}\right)\Gamma\left(k+\frac{1}{2}-s-w\right)}{\Gamma\left(s+w\right)}\sum_{n\leq x}\frac{a_{f}(n)}{n^{k+\frac{1}{2}-s-w}}\,\left(\pi^{2}x\right)^{w}dw.
\end{align*}
At last, if we note that
\[
\sum_{n=1}^{\infty}\frac{a_{f}(n)}{n^{s}}e^{-(n/x)^{h}}=\sum_{n\leq x}\frac{a_{f}(n)}{n^{s}}+\sum_{n\leq x}\frac{a_{f}(n)}{n^{s}}\left(e^{-(n/x)^{h}}-1\right)+\sum_{n>x}\frac{a_{f}(n)}{n^{s}}e^{-(n/x)^{h}},
\]
then (\ref{Reflection principle})
follows. The derivation of the truncated version (\ref{Truncated Reflection principle})
is analogous: starting with the Cahen-Mellin integral (\ref{af(n)ns}),
we only truncate the line of integration there at the point $|\text{Im}(w)|=\log^{2}|t|$.
By Stirling's formula, the integral along the half lines $|\text{Im}(w)|>\log^{2}|t|$
can be estimated as $O(|t|^{-A})$ for any large positive constant
$A$. This shows the formula
\[
\sum_{n=1}^{\infty}\frac{a_{f}(n)}{n^{s}}e^{-\left(n/x\right)^{h}}=\frac{1}{2\pi ih}\,\intop_{c-i\log^{2}|t|}^{c+i\log^{2}|t|}\Gamma\left(\frac{w}{h}\right)L\left(s+w,f\right)\,x^{w}dw+O\left(|t|^{-A}\right),
\]
from which one deduces (\ref{Truncated Reflection principle}) in
the same way as one obtains (\ref{Reflection principle}). 
\end{proof}

\subsection{A mean value theorem for $L(s,f)$}

Before proving the main lemma of this section, we need to recall a result due to Montgomery and Vaughan \cite{montgomery_vaughan} (see also \cite{ramachandra_note} for a simpler proof of this result). 

\begin{lemma}\label{Lemma Montgomery Vaughan}
For $n=1,...,N$, let $a_{n}$ and $b_{n}$ be two complex numbers. Then we have that 
\begin{equation}
\intop_{0}^{T}\left(\sum_{n=1}^{N}a_{n}n^{-it}\right)\left(\sum_{n=1}^{N}b_{n}n^{it}\right)dt=T\,\sum_{n=1}^{M}a_{n}b_{n}+O\left(\left(\sum_{n=1}^{N}n|a_{n}|^{2}\right)^{1/2}\left(\sum_{n=1}^{N}n|b_{n}|^{2}\right)^{1/2}\right). \label{Montgomery_Vaughan identity}
\end{equation}
\end{lemma}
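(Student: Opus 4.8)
The plan is to expand the product of the two Dirichlet polynomials and integrate term by term. Writing the left-hand side of (\ref{Montgomery_Vaughan identity}) as
\[
\sum_{m=1}^{N}\sum_{n=1}^{N} a_n b_m \intop_0^T \left(\frac{m}{n}\right)^{it}\,dt,
\]
the diagonal contribution $m=n$ is exactly $T\sum_{n=1}^N a_n b_n$ (so the upper limit $M$ in the statement is to be read as $N$), while for $m\neq n$ one has $\intop_0^T (m/n)^{it}\,dt = \bigl((m/n)^{iT}-1\bigr)/\bigl(i\log(m/n)\bigr)$. Hence the off-diagonal part equals
\[
\frac{1}{i}\sum_{\substack{m,n\le N\\ m\neq n}}\frac{b_m m^{iT}\,\bigl(\overline{a_n n^{-iT}}\bigr)^{\!*}}{\log(m/n)}-\frac{1}{i}\sum_{\substack{m,n\le N\\ m\neq n}}\frac{b_m a_n}{\log(m/n)},
\]
and the task reduces to bounding two bilinear sums of the shape $\sum_{m\neq n} u_m v_n/\log(m/n)$ in which the sequences $(u_m)$, $(v_n)$ are, up to unimodular factors, $(b_m)$ and $(a_n)$.

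The analytic input will be the weighted Hilbert-type inequality of Montgomery and Vaughan \cite{montgomery_vaughan}: for complex $(u_n)$, $(v_n)$ and real $(\lambda_n)$ with $\delta_n:=\min_{m\neq n}|\lambda_m-\lambda_n|>0$,
\[
\Bigl|\sum_{m\neq n}\frac{u_m\overline{v_n}}{\lambda_m-\lambda_n}\Bigr|\ \le\ \frac{3\pi}{2}\Bigl(\sum_n \delta_n^{-1}|u_n|^2\Bigr)^{1/2}\Bigl(\sum_n\delta_n^{-1}|v_n|^2\Bigr)^{1/2}.
\]
I would either quote this directly or reproduce Ramachandra's short argument from \cite{ramachandra_note}. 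Specializing to $\lambda_n=\log n$, the nearest point to $\lambda_n$ is $\lambda_{n+1}$, so $\delta_n=\log(1+\tfrac1n)\ge \tfrac1{n+1}$ for every $n\ge1$ and therefore $\delta_n^{-1}\le n+1\le 2n$. Since $\log(m/n)=\lambda_m-\lambda_n$, applying the inequality to the second sum above (with $u_m=b_m$, $v_n=\overline{a_n}$) gives a bound $\ll\bigl(\sum_n n|a_n|^2\bigr)^{1/2}\bigl(\sum_n n|b_n|^2\bigr)^{1/2}$; for the first sum one takes $u_m=b_m m^{iT}$ and $v_n=\overline{a_n}\,n^{-iT}$, so that $|u_m|=|b_m|$, $|v_n|=|a_n|$ and the very same bound results. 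Adding the diagonal and off-diagonal contributions yields (\ref{Montgomery_Vaughan identity}).

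The only substantial obstacle is the weighted Hilbert inequality itself, which is the true analytic core of the Montgomery–Vaughan theorem; everything surrounding it is bookkeeping. As it is a well-established result with a self-contained proof, I would treat it as a black box and devote the exposition to the elementary reduction above, taking care to verify that the weights $\delta_n^{-1}$ are genuinely $O(n)$ uniformly in $n$ (in particular at the boundary index $n=1$, where $\delta_1=\log 2$), since this is precisely what converts the weights in the Hilbert inequality into the factors $n|a_n|^2$ and $n|b_n|^2$ appearing in the statement.
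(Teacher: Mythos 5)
The paper does not actually prove this lemma---it quotes it from Montgomery and Vaughan, with a pointer to Ramachandra's simplified proof---so there is no internal argument to compare against; your reduction of the mean value formula to the weighted Hilbert inequality (diagonal terms giving $T\sum_{n\le N} a_n b_n$, off-diagonal terms controlled by the $\tfrac{3\pi}{2}$-inequality with $\lambda_n=\log n$ and $\delta_n^{-1}=\bigl(\log(1+\tfrac1n)\bigr)^{-1}\le n+1\ll n$) is precisely the standard derivation in the cited source and is correct. Your reading of the upper limit $M$ as $N$ is the right one (it is a typo in the statement), and the only blemish is the garbled conjugation in your displayed off-diagonal sum: to match the Hilbert form $\sum u_m\overline{v_n}/(\lambda_m-\lambda_n)$ you need $u_m\overline{v_n}=b_m a_n (m/n)^{iT}$, i.e.\ $v_n=\overline{a_n}\,n^{iT}$, but since only $|u_m|=|b_m|$ and $|v_n|=|a_n|$ enter the bound this does not affect the conclusion.
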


\bigskip{}

Appealing to the previous approximate functional equation (\ref{Truncated Reflection principle}), we use (\ref{Montgomery_Vaughan identity}) to find a mean value estimate for $L(s,f)$
on the critical line $\text{Re}(s)=\frac{k}{2}+\frac{1}{4}$. This is the content of the next lemma. 

\begin{lemma} \label{ramachandra mean}
Let $f\in S_{k+\frac{1}{2}}(\Gamma_{0}(4))$ be an eigenform for all
Hecke operators $\mathcal{T}_{n^{2}}$ and the Fricke operator $W_{4}$
and let $L(s,f)$ be the associated $L-$function. Then we have that
\begin{equation}
\intop_{0}^{T}\left|L\left(\frac{k}{2}+\frac{1}{4}+it,f\right)\right|^{2}dt=A_{f}T\log(T)+O(T),\label{mean value cusp form half}
\end{equation}
where $A_{f}$ is a constant depending on the cusp form $f$.
\end{lemma}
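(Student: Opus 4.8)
The plan is to run Ramachandra's method: insert the truncated approximate functional equation of Lemma~\ref{approximate functional equation} for $L(s,f)$ on the line $\mathrm{Re}(s)=\tfrac k2+\tfrac14$, expand $|L|^2$, and evaluate the resulting terms with the Montgomery--Vaughan identity \eqref{Montgomery_Vaughan identity}. Write $s=\tfrac k2+\tfrac14+it$ and $\sigma_0=\tfrac k2+\tfrac14$, so that on this line $k+\tfrac12-s=\overline s$ and the factor $\pi^{2s-k-\frac12}\,\Gamma(k+\tfrac12-s)/\Gamma(s)=\pi^{2it}\,e^{-2i\arg\Gamma(\sigma_0+it)}$ has modulus one (this is where the $W_4$-eigenform hypothesis enters). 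I would split $[0,T]$ into $O(\log T)$ dyadic blocks $[T_1,2T_1]$ and on each block apply Lemma~\ref{approximate functional equation} with the free parameter $x=T_1$ (and $h=1$, a fixed $\eta>\tfrac12$, a fixed $\alpha\in(0,\tfrac k2+\tfrac14)$), writing $L(s,f)=S_1(s)+S_2(s)-E(s)$ with $S_1(s)=\sum_{n\ge1}a_f(n)n^{-s}e^{-(n/x)^h}$, $S_2(s)=(-1)^\ell\pi^{2s-k-\frac12}\tfrac{\Gamma(k+\frac12-s)}{\Gamma(s)}\sum_{n\le x}a_f(n)n^{-(k+\frac12-s)}$, and $E(s)$ equal to the two contour integrals plus $O(|t|^{-A})$. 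Keeping $x$ constant on each block is precisely what lets Lemma~\ref{Lemma Montgomery Vaughan} be applied, and the dyadic sum reconstitutes the $T\log T$ main term.

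\textbf{Diagonal contribution.} On the critical line $n^{-(k+\frac12-s)}=n^{-\sigma_0}n^{it}$, so $|S_2|=\bigl|\sum_{n\le x}a_f(n)n^{-\sigma_0}n^{it}\bigr|$; applying \eqref{Montgomery_Vaughan identity} to $\int_{T_1}^{2T_1}|S_j|^2\,dt$ for $j=1,2$ and using $2\sigma_0=k+\tfrac12$ produces a main term $T_1\sum_n|a_f(n)|^2 n^{-(k+\frac12)}\varpi_j(n)$ and an error $O\bigl(\sum_n|a_f(n)|^2 n^{-(k-\frac12)}\varpi_j(n)\bigr)$, where $\varpi_1(n)=e^{-2(n/x)^h}$ and $\varpi_2(n)=\mathbf 1_{n\le x}$. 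By partial summation from the Rankin--Selberg estimate \eqref{mean discrete}, the first sum equals $\tfrac{2r_f}{2k+1}\log x+O(1)$ and the second is $O(x)=O(T_1)$; hence $\int_{T_1}^{2T_1}|S_j|^2\,dt=\tfrac{2r_f}{2k+1}T_1\log T_1+O(T_1)$, and summing dyadically these combine into a term of the shape $A_fT\log T+O(T)$.

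\textbf{The error term and the upper bound.} For $E$ I would use Stirling's formula: on $\mathrm{Re}(w)=-\eta$ (resp.\ $\mathrm{Re}(w)=\alpha$) with $|\mathrm{Im}(w)|\le\log^2|t|$ one has $\bigl|\Gamma(k+\tfrac12-s-w)/\Gamma(s+w)\bigr|\asymp|t|^{2\eta}$ (resp.\ $\asymp|t|^{-2\alpha}$), while $\Gamma(w/h)$ decays exponentially in $|\mathrm{Im}(w)|$; Cauchy--Schwarz in the $w$-integral followed by \eqref{Montgomery_Vaughan identity} applied to the inner sums $\sum_{n>x}$, $\sum_{n\le x}$ shows, on the block with $x=T_1$, that each contour integral has $L^2$-mean $O(T_1)$, so $\int_0^T|E|^2\,dt=O(T)$. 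Combined with the diagonal bounds and $|L|^2\le 3(|S_1|^2+|S_2|^2+|E|^2)$, this already yields $\int_0^T|L(\sigma_0+it,f)|^2\,dt\ll T\log T$ --- which is all that the density estimate of the next section actually needs.

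\textbf{The off-diagonal terms: the main obstacle.} To pin down the constant $A_f$ one must still show that the cross terms $\int S_1\overline{S_2}$ and $\int(S_1+S_2)\overline E$ contribute only $O(T)$. In $\int_{T_1}^{2T_1}S_1\overline{S_2}\,dt$ the bilinear sum $\sum_{m,n}a_f(m)\overline{a_f(n)}(mn)^{-\sigma_0}e^{-(m/x)^h}$ is paired with $\pi^{-2it}e^{2i\arg\Gamma(\sigma_0+it)}$, whose phase derivative is $\sim 2\log(t/\pi)$; since $\tfrac{d}{dt}\arg\Gamma(\sigma_0+it)=\log t+O(t^{-2})$, the $(m,n)$-term has combined phase derivative $\sim\log\!\bigl((t/\pi)^2/(mn)\bigr)$, which vanishes on the hyperbola $mn\asymp(t/\pi)^2$, and with $x\asymp T_1$ this hyperbola does meet the summation range. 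I would therefore isolate the (near-)stationary pairs: van der Corput's first-derivative test disposes of the rest, while on the stationary part a stationary-phase evaluation of the $t$-integral reduces it to an exponential sum in the half-integral weight coefficients $a_f(m)\overline{a_f(n)}$ with a $\sqrt{mn}$-type phase, which is then controlled by Voronoi summation (Rankin--Selberg, as in Lemma~2.1 / \eqref{mean discrete}) together with a Wilton-type bound for $f$ in the spirit of Kim's estimate; $\int(S_1+S_2)\overline E$ is handled analogously, with the exponential smoothing in $S_1$ again keeping the tails negligible. The delicate point --- and this is where Ramachandra's method and the smoothed form of Lemma~\ref{approximate functional equation} earn their keep --- is to balance the choice of $x$, $h$, $\eta$, $\alpha$ on each dyadic block so that these error and off-diagonal contributions are simultaneously $O(T)$; everything else is routine, and the dyadic summation then gives $\int_0^T|L(\sigma_0+it,f)|^2\,dt=A_fT\log T+O(T)$.
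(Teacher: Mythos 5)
Your skeleton --- the truncated approximate functional equation of Lemma~\ref{approximate functional equation} on dyadic blocks with the parameter $x$ frozen on each block, Montgomery--Vaughan (Lemma~\ref{Lemma Montgomery Vaughan}) for the diagonal, and partial summation against \eqref{mean discrete} --- is exactly the paper's, and your diagonal and error-term computations reproduce Claim~\ref{claim 2.1} in substance (modulo a slip in the constant: partial summation gives $\sum_{n\le x}|a_f(n)|^2 n^{-(k+\frac12)}=r_f\log x+O(1)$, not $\tfrac{2r_f}{2k+1}\log x$, since the derivative of the main term of \eqref{mean discrete} is $r_f y^{k-\frac12}\,dy$). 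Your remark that the bare upper bound $\ll T\log T$ is all the density argument of the final section needs is also correct.

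The genuine gap is in the cross terms, which you rightly single out as the main obstacle but then only sketch. The stationary-phase/van der Corput/Voronoi program you propose is a much harder route than necessary, and as written it is a list of nontrivial ingredients (isolation of the stationary hyperbola $mn\asymp(t/\pi)^2$, a stationary-phase evaluation, Voronoi summation and a Wilton-type bound for half-integral weight coefficients) that are invoked rather than proved; for these forms the Wilton-type input is itself a substantial theorem, and nothing in your outline controls the resulting exponential sum to the required precision $O(T)$. The paper avoids all of this with Ramachandra's contour-shift device, which is the one idea missing from your proposal. On the critical line one has $\overline{J_{\ell}(t)}=\mathcal{J}_{\ell}\left(k+\tfrac12-z\right)$ with $z=\tfrac k2+\tfrac14+it$, so
\[
\intop_{\frac{T}{2}}^{T}J_{m}(t)\,\overline{J_{\ell}(t)}\,dt=\frac{1}{i}\intop_{\frac{k}{2}+\frac{1}{4}+i\frac{T}{2}}^{\frac{k}{2}+\frac{1}{4}+iT}\mathcal{J}_{m}(z)\,\mathcal{J}_{\ell}\left(k+\tfrac12-z\right)dz
\]
is a contour integral of an analytic function; shifting the vertical segment to $\mathrm{Re}(z)=\beta$ off the critical line and applying Cauchy--Schwarz together with the off-line mean-square bounds $\int_{T/2}^{T}|\mathcal{J}_{m}(\beta+it)|^{2}dt\ll T^{k+\frac32-2\beta}$ of Claim~\ref{claim 2.2} gives $O(T)$ for every cross term, the horizontal segments being absorbed by the pointwise bounds of Claim~\ref{claim 2.3}. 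This is precisely why the paper proves estimates for $\mathcal{J}_{\ell}(\sigma+it)$ at $\sigma\neq\tfrac k2+\tfrac14$ at all. Without either that device or a fully executed version of your stationary-phase program, the asymptotic with the explicit main term $A_{f}T\log T$ is not established.
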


\begin{proof}
Let $\frac{k}{2}+\frac{1}{4}\leq\text{Re}(s)=\sigma\leq\frac{k}{2}+\frac{3}{4}$, $\text{Im}(s)=t$, $h=1$ and $x=T$,
for $\frac{T}{2}\leq t\leq T$. Since $t\asymp T$, (\ref{Truncated Reflection principle})
gives the approximate formula
\begin{align}
L(s,f) & =\sum_{n\leq T}\frac{a_{f}(n)}{n^{s}}+\sum_{n\leq T}\frac{a_{f}(n)}{n^{s}}\left(e^{-\frac{n}{T}}-1\right)\nonumber \\
 & +\sum_{n>T}\frac{a_{f}(n)}{n^{s}}e^{-\frac{n}{T}}+(-1)^{\ell}\pi^{2s-k-\frac{1}{2}}\frac{\Gamma\left(k+\frac{1}{2}-s\right)}{\Gamma(s)}\,\sum_{n\leq T}\frac{a_{f}(n)}{n^{k+\frac{1}{2}-s}}\nonumber \\
 & -\frac{(-1)^{\ell}\pi^{2s-k-\frac{1}{2}}}{2\pi i}\intop_{-\eta-i\log^{2}|t|}^{-\eta+i\log^{2}|t|}\frac{\Gamma\left(k+\frac{1}{2}-s-w\right)\Gamma\left(\frac{w}{h}\right)}{\Gamma\left(s+w\right)}\sum_{n>T}\frac{a_{f}(n)}{n^{k+\frac{1}{2}-s-w}}\left(\pi^{2}T\right)^{w}dw\nonumber \\
 & -\frac{(-1)^{\ell}\pi^{2s-k-\frac{1}{2}}}{2\pi i}\intop_{\alpha-i\log^{2}|t|}^{\alpha+i\log^{2}|t|}\frac{\Gamma\left(k+\frac{1}{2}-s-w\right)\Gamma\left(\frac{w}{h}\right)}{\Gamma\left(s+w\right)}\sum_{n\leq T}\frac{a_{f}(n)}{n^{k+\frac{1}{2}-s-w}}\left(\pi^{2}T\right)^{w}dw+O\left(T^{-A}\right)\nonumber \\
 & :=\sum_{\ell=1}^{6}\mathcal{J}_{\ell}(s)+O(T^{-A}),\,\,\,\,\,\,\text{say},\label{More general gormula!}
\end{align}
where $\eta>\sigma-\frac{k}{2}+\frac{1}{4}$ and $0<\alpha<\frac{k}{2}+\frac{1}{4}$. 
Furthermore, if we replace $s=\frac{k}{2}+\frac{1}{4}+it$ and set
$J_{\ell}(t):=\mathcal{J}_{\ell}\left(\frac{k}{2}+\frac{1}{4}+it\right)$,
the previous formula implies
\begin{align}
L\left(\frac{k}{2}+\frac{1}{4}+it,f\right) & =\sum_{n\leq T}\frac{a_{f}(n)}{n^{\frac{k}{2}+\frac{1}{4}+it}}+\sum_{n\leq T}\frac{a_{f}(n)}{n^{\frac{k}{2}+\frac{1}{4}+it}}\left(e^{-\frac{n}{T}}-1\right)\nonumber \\
 & +\sum_{n>T}\frac{a_{f}(n)}{n^{\frac{k}{2}+\frac{1}{4}+it}}e^{-\frac{n}{T}}+(-1)^{\ell}\pi^{2it}\frac{\Gamma\left(\frac{k}{2}+\frac{1}{4}-it\right)}{\Gamma\left(\frac{k}{2}+\frac{1}{4}+it\right)}\,\sum_{n\leq T}\frac{a_{f}(n)}{n^{\frac{k}{2}+\frac{1}{4}-it}}\nonumber \\
 & -\frac{(-1)^{\ell}\pi^{2it}}{2\pi i}\,\intop_{-\eta-i\log^{2}T}^{-\eta+i\log^{2}T}\frac{\Gamma\left(\frac{k}{2}+\frac{1}{4}-it-w\right)\Gamma\left(w\right)}{\Gamma\left(\frac{k}{2}+\frac{1}{4}+it+w\right)}\sum_{n>T}\frac{a_{f}(n)}{n^{\frac{k}{2}+\frac{1}{4}-it-w}}\left(\pi^{2}T\right)^{w}dw\nonumber \\
 & -\frac{(-1)^{\ell}\pi^{2it}}{2\pi i}\,\intop_{\alpha-i\log^{2}T}^{\alpha+i\log^{2}T}\frac{\Gamma\left(\frac{k}{2}+\frac{1}{4}-it-w\right)\Gamma\left(w\right)}{\Gamma\left(\frac{k}{2}+\frac{1}{4}+it+w\right)}\sum_{n\leq T}\frac{a_{f}(n)}{n^{\frac{k}{2}+\frac{1}{4}-it-w}}\left(\pi^{2}T\right)^{w}dw+O\left(T^{-A}\right)\nonumber \\
 & :=\sum_{\ell=1}^{6}J_{\ell}(t)+O(T^{-A}),\label{writing as 6 integrals!}
\end{align}
where, by the conditions of Lemma \ref{approximate functional equation}, $\eta>\frac{1}{2}$.
Note that $|J_{1}(t)|=|J_{4}(t)|$: we use the previous decomposition
to write the mean square in the form
\begin{equation}
\intop_{T/2}^{T}\left|L\left(\frac{k}{2}+\frac{1}{4}+it,f\right)\right|^{2}dt=\sum_{\ell=1}^{6}\intop_{\frac{T}{2}}^{T}|J_{\ell}(t)|^{2}dt+O\left(\sum_{m\neq\ell}\intop_{\frac{T}{2}}^{T}J_{m}(t)\,\overline{J_{\ell}(t)}\,dt\right).\label{after approximate functional in the proof}
\end{equation}

The proof of this result is a combination of three claims. For simplicity
of the exposition, our first claim essentially consists in evaluating
the first six terms of the equality (\ref{after approximate functional in the proof}).

\begin{claim}\label{claim 2.1}
The following estimates hold
\begin{equation}
\intop_{\frac{T}{2}}^{T}|J_{1}(t)|^{2}dt=\intop_{\frac{T}{2}}^{T}|J_{4}(t)|^{2}dt=\frac{r_{f}}{2}T\log(T)+O(T),\label{J1 J4 claim}
\end{equation}
\begin{equation}
\intop_{\frac{T}{2}}^{T}\left|J_{2}(t)\right|^{2}dt\ll T,\label{J2 claim}
\end{equation}
\begin{equation}
\intop_{\frac{T}{2}}^{T}|J_{3}(t)|^{2}dt\ll1,\label{J3 claim}
\end{equation}
\begin{equation}
\intop_{T/2}^{T}|J_{5}(t)|^{2}dt\ll T,\label{J5 square claim}
\end{equation}
and, finally,
\begin{equation}
\intop_{T/2}^{T}|J_{6}(t)|^{2}dt\ll T,\label{J6 square claim}
\end{equation}
where in (\ref{J1 J4 claim}) $r_{f}$ denotes the constant appearing on the main term of (\ref{mean discrete}). 
\end{claim}

\begin{proof}
Since $|J_{1}(t)|=|J_{4}(t)|$, we have that
\begin{equation}
\intop_{\frac{T}{2}}^{T}|J_{1}(t)|^{2}dt=\intop_{\frac{T}{2}}^{T}|J_{4}(t)|^{2}dt,\label{J1 equal J4}
\end{equation}
so, if we prove that the right-hand side of (\ref{J1 J4 claim}) equals
to one of the sides of (\ref{J1 equal J4}), then (\ref{J1 J4 claim})
is completely proved. By (\ref{Montgomery_Vaughan identity}) and (\ref{mean discrete}), 
\begin{align*}
\intop_{\frac{T}{2}}^{T}|J_{1}(t)|^{2}dt & =\intop_{\frac{T}{2}}^{T}\left|\sum_{n\leq T}\frac{a_{f}(n)}{n^{\frac{k}{2}+\frac{1}{4}+it}}\right|^{2}\,dt=\sum_{n\leq T}\frac{|a_{f}(n)|^{2}}{n^{k+\frac{1}{2}}}\left(\frac{T}{2}+O\left(n\right)\right)\\
 & =\frac{1}{2}r_{f}T\log(T)+\frac{1}{2}r_{f}^{(0)}T+O\left(T^{\frac{5}{12}+\epsilon}\right)+O\left(\sum_{n\leq T}\frac{|a_{f}(n)|^{2}}{n^{k-\frac{1}{2}}}\right)\\
 & =\frac{1}{2}r_{f}T\log(T)+\frac{1}{2}r_{f}^{(0)}T+O\left(T^{\frac{5}{12}+\epsilon}\right)+O\left(r_{f}T\right)+O\left(T^{\frac{11}{12}+\epsilon}\right)\\
 & =\frac{r_{f}}{2}T\log(T)+O(T),
\end{align*}
and so we have that
\[
\intop_{\frac{T}{2}}^{T}|J_{4}(t)|^{2}dt=\intop_{\frac{T}{2}}^{T}\left|J_{1}(t)\right|^{2}dt=\frac{r_{f}}{2}T\log(T)+O(T).
\]
Next, the estimates of the mean values of $J_{2}(t)$ and $J_{3}(t)$ can be easily obtained in the following form, 
\begin{align*}
\intop_{\frac{T}{2}}^{T}\left|J_{2}(t)\right|^{2}dt & =\intop_{\frac{T}{2}}^{T}\left|\sum_{n\leq T}\frac{a_{f}(n)}{n^{\frac{k}{2}+\frac{1}{4}+it}}\left(e^{-\frac{n}{T}}-1\right)\right|^{2}dt\\
 & \ll T\sum_{n\leq T}\frac{|a_{f}(n)|^{2}}{n^{k+\frac{1}{2}}}\left(e^{-\frac{n}{T}}-1\right)^{2}+\sum_{n\leq T}\frac{|a_{f}(n)|^{2}}{n^{k-\frac{1}{2}}}\left(e^{-\frac{n}{T}}-1\right)^{2}\\
 & \ll\frac{1}{T}\,\sum_{n\leq T}\frac{|a_{f}(n)|^{2}}{n^{k-\frac{3}{2}}}+\frac{1}{T^{2}}\sum_{n\leq T}\frac{|a_{f}(n)|^{2}}{n^{k-\frac{5}{2}}}\\
 & \ll T
\end{align*}
and
\begin{align*}
\intop_{\frac{T}{2}}^{T}|J_{3}(t)|^{2}dt & =\intop_{\frac{T}{2}}^{T}\left|\sum_{n>T}\frac{a_{f}(n)}{n^{\frac{k}{2}+\frac{1}{4}+it}}e^{-\frac{n}{T}}\right|^{2}dt=\sum_{n>T}\frac{|a_{f}(n)|^{2}}{n^{k+\frac{1}{2}}}e^{-\frac{2n}{T}}\left(\frac{T}{2}+O(n)\right)\\
 & \ll1.
\end{align*}

We now bound the mean values of $J_{5}(t)$ and $J_{6}(t)$: by the Cauchy-Schwarz
inequality and Stirling's formula, one is able to deduce that
\begin{align}
\intop_{T/2}^{T}|J_{5}(t)|^{2}dt & \ll\intop_{T/2}^{T}\intop_{-\eta-i\log^{2}T}^{-\eta+i\log^{2}T}\left|\frac{\Gamma\left(\frac{k}{2}+\frac{1}{4}-it-w\right)\Gamma\left(w\right)}{\Gamma\left(\frac{k}{2}+\frac{1}{4}+it+w\right)}(\pi^{2}T)^{w}\right|^{2}\,\left|\sum_{n>T}\frac{a_{f}(n)}{n^{\frac{k}{2}+\frac{1}{4}-it-w}}\right|^{2}\,|dw|\,dt\nonumber \\
 & \ll T^{2\eta}\,\intop_{-\eta-i\log^{2}(T)}^{-\eta+i\log^{2}(T)}\left|\Gamma(w)\right|^{2}\,\intop_{T/2}^{T}\left|\sum_{n>T}\frac{a_{f}(n)}{n^{\frac{k}{2}+\frac{1}{4}-it-w}}\right|^{2}dt\,|dw|,\label{J5(t)2}
\end{align}
where the interchange of the orders of integration is justified due
to the fact that the integrals are over compact segments and the choice
of $\eta>\frac{1}{2}$ making the series absolutely convergent. We now appeal
to Montgomery-Vaughan's result (\ref{Montgomery_Vaughan identity}) to estimate the integral with
respect to $t$: since, by Stirling's formula, $\Gamma(-\eta+it)\in L_{1}(\mathbb{R})$
for $\eta\notin\mathbb{N}_{0}$, we have that 
\begin{align}
\intop_{T/2}^{T}|J_{5}(t)|^{2}dt & \ll T^{2\eta}\,\intop_{-\eta-i\log^{2}(T)}^{-\eta+i\log^{2}(T)}\left|\Gamma(w)\right|^{2}\,\sum_{n>T}\frac{|a_{f}(n)|^{2}}{n^{k+2\eta+\frac{1}{2}}}\left(T+n\right)\,|dw|\nonumber \\
 & \ll T^{2\eta}\,\sum_{n>T}\frac{|a_{f}(n)|^{2}}{n^{k+2\eta+\frac{1}{2}}}\left(T+n\right)\nonumber \\
 & =T^{1+2\eta}\intop_{T}^{\infty}t^{-k-\frac{1}{2}-2\eta}d\left(\sum_{T<n\leq t}|a_{f}(n)|^{2}\right)\nonumber \\
 & +T^{2\eta}\intop_{T}^{\infty}t^{-k+\frac{1}{2}-2\eta}d\left(\sum_{T<n\leq t}|a_{f}(n)|^{2}\right),\label{at most combining}
\end{align}
where we have used summation by parts. Let us estimate the first integral
(the second integral is analogous): an integration by parts yields
\begin{align}
\intop_{T}^{\infty}t^{-k-\frac{1}{2}-2\eta}d\left(\sum_{T<n\leq t}|a_{f}(n)|^{2}\right) & =\lim_{\tau\rightarrow\infty}\tau^{-k-\frac{1}{2}-2\eta}\sum_{T<n\leq\tau}|a_{f}(n)|^{2}\nonumber \\
 & +\left(k+\frac{1}{2}+2\eta\right)\,\intop_{T}^{\infty}\sum_{T<n\leq t}|a_{f}(n)|^{2}\,t^{-k-\frac{3}{2}-2\eta}dt\nonumber \\
 & \ll T^{-2\eta},\label{estimate first guy}
\end{align}
because $\sum_{n\leq\tau}|a_{f}(n)|^{2}\ll\tau^{k+\frac{1}{2}}$ (see
(\ref{mean discrete}) above). The second integral can be estimated
analogously and the same procedure holds because $\eta>\frac{1}{2}$
by hypothesis. In fact, one can deduce that
\begin{equation}
\intop_{T}^{\infty}t^{-k+\frac{1}{2}-2\eta}d\left(\sum_{T<n\leq t}|a_{f}(n)|^{2}\right)\ll T^{1-2\eta},\label{estimate simple second guy}
\end{equation}
A combination of (\ref{at most combining}) and (\ref{estimate first guy})
shows that
\begin{equation}
\intop_{T/2}^{T}|J_{5}(t)|^{2}dt\ll T.\label{J5(t) estimate}
\end{equation}

We now estimate $\intop_{T/2}^{T}|J_{6}(t)|^{2}dt$: the proof is
exactly the same as the estimate of (\ref{J5(t) estimate}), so we
shall be brief. Indeed, the Cauchy-Schwarz inequality
\begin{align*}
\intop_{T/2}^{T}|J_{6}(t)|^{2}dt & \ll\intop_{T/2}^{T}\intop_{\alpha-i\log^{2}T}^{\alpha+i\log^{2}T}\left|\frac{\Gamma\left(\frac{k}{2}+\frac{1}{4}-it-w\right)\Gamma\left(w\right)}{\Gamma\left(\frac{k}{2}+\frac{1}{4}+it+w\right)}\left(\pi^{2}T\right)^{w}\right|^{2}\,\left|\sum_{n\leq T}\frac{a_{f}(n)}{n^{\frac{k}{2}+\frac{1}{4}-it-w}}\right|^{2}|dw|\,dt\\
 & \ll T^{-2\alpha}\intop_{\alpha-i\log^{2}T}^{\alpha+i\log^{2}T}|\Gamma(w)|^{2}\,\intop_{T/2}^{T}\left|\sum_{n\leq T}\frac{a_{f}(n)}{n^{\frac{k}{2}+\frac{1}{4}-it-w}}\right|^{2}dt\,|dw|\\
 & \ll T^{-2\alpha}\,\sum_{n\leq T}\frac{|a_{f}(n)|^{2}}{n^{k+\frac{1}{2}-2\alpha}}\left(T+O(n)\right)\\
 & \ll T,
\end{align*}
where we have used (\ref{mean discrete}) and the fact that $0<\alpha<\frac{k}{2}+\frac{1}{4}$.
\end{proof}
\bigskip{}

We have dealt with mean value estimates for $J_{\ell}(t)$. The next
claim does an analogous job for the mean value estimates of $\mathcal{J}_{\ell}(\sigma+it)$,
$\sigma\neq\frac{k}{2}+\frac{1}{4}$.

\begin{claim} \label{claim 2.2}

The following estimates take place
\begin{equation}
\intop_{\frac{T}{2}}^{T}\left|\mathcal{J}_{1}\left(\sigma+it\right)\right|^{2}\,dt\ll T^{k+\frac{3}{2}-2\sigma},\,\,\,\,0<\sigma<\frac{k}{2}+\frac{1}{4},\label{1st estimate claim 2}
\end{equation}
\begin{equation}
\intop_{\frac{T}{2}}^{T}\left|\mathcal{J}_{2}\left(\sigma+it\right)\right|^{2}\,dt\ll T^{k+\frac{3}{2}-2\sigma},\,\,\,\,0<\sigma<\frac{k}{2}+\frac{3}{4},\label{first estimate claim 3}
\end{equation}
\begin{equation}
\intop_{\frac{T}{2}}^{T}\left|\mathcal{J}_{3}\left(\sigma+it\right)\right|^{2}dt\ll1,\,\,\,\,0<\sigma<\frac{k}{2}+\frac{3}{4},\label{third estimate outside line}
\end{equation}
\begin{equation}
\intop_{\frac{T}{2}}^{T}\left|\mathcal{J}_{4}\left(\sigma+it\right)\right|^{2}dt\ll T^{k+\frac{3}{2}-2\sigma},\,\,\,\,\frac{k}{2}+\frac{1}{4}<\sigma<\frac{k}{2}+\frac{3}{4},\label{second estimate claim 2}
\end{equation}
and
\begin{equation}
\intop_{\frac{T}{2}}^{T}\left|\mathcal{J}_{5}\left(\sigma+it\right)\right|^{2}\,dt\ll T^{k+\frac{3}{2}-2\sigma},\,\,\,\,0<\sigma<\frac{k}{2}+\frac{3}{4},\label{J5 estimate outside strip}
\end{equation}
\begin{equation}
\intop_{\frac{T}{2}}^{T}\left|\mathcal{J}_{6}\left(\sigma+it\right)\right|^{2}\,dt\ll T^{k+\frac{3}{2}-2\sigma},\,\,\,\,0<\sigma<\frac{k}{2}+\frac{3}{4}.\label{J6 estimate outside strip line}
\end{equation}

\end{claim}

\begin{proof}
The first proof consists in using Lemma \ref{Lemma Montgomery Vaughan}
and (\ref{mean discrete}): indeed,
\begin{align}
\intop_{\frac{T}{2}}^{T}\left|\sum_{n\leq T}\frac{a_{f}(n)}{n^{\sigma+it}}\right|^{2}\,dt & =\intop_{\frac{T}{2}}^{T}\left|\mathcal{J}_{1}\left(\sigma+it\right)\right|^{2}\,dt=\sum_{n\leq T}\frac{|a_{f}(n)|^{2}}{n^{2\sigma}}\left(\frac{T}{2}+O(n)\right)\nonumber \\
 & \ll T^{k+\frac{3}{2}-2\sigma}.\label{After using mean and montgomery}
\end{align}
The second estimate can be found by the following inequalities
\begin{align*}
\intop_{\frac{T}{2}}^{T}\left|\mathcal{J}_{2}\left(\sigma+it\right)\right|^{2}dt & =\intop_{\frac{T}{2}}^{T}\left|\sum_{n\leq T}\frac{a_{f}(n)}{n^{\sigma+it}}\left(e^{-\frac{n}{T}}-1\right)\right|^{2}dt\\
 & \ll T\sum_{n\leq T}\frac{|a_{f}(n)|^{2}}{n^{2\sigma}}\left(e^{-\frac{n}{T}}-1\right)^{2}+\sum_{n\leq T}\frac{|a_{f}(n)|^{2}}{n^{2\sigma-1}}\left(e^{-\frac{n}{T}}-1\right)^{2}\\
 & \ll\frac{1}{T}\,\sum_{n\leq T}\frac{|a_{f}(n)|^{2}}{n^{2\sigma-2}}+\frac{1}{T^{2}}\sum_{n\leq T}\frac{|a_{f}(n)|^{2}}{n^{2\sigma-3}}\\
 & \ll T^{k+\frac{3}{2}-2\sigma},
\end{align*}
while the third estimate, (\ref{third estimate outside line}), simply comes from the straightforward simplifications
\begin{align*}
\intop_{\frac{T}{2}}^{T}\left|\mathcal{J}_{3}\left(\sigma+it\right)\right|^{2}dt & =\intop_{\frac{T}{2}}^{T}\left|\sum_{n>T}\frac{a_{f}(n)}{n^{\sigma+it}}e^{-\frac{n}{T}}\right|^{2}dt=\sum_{n>T}\frac{|a_{f}(n)|^{2}}{n^{2\sigma}}e^{-\frac{2n}{T}}\left(\frac{T}{2}+O(n)\right)\\
 & \ll1.
\end{align*}
Using Stirling's formula and assuming that $\frac{k}{2}+\frac{1}{4}<\sigma<\frac{k}{2}+\frac{3}{4}$,
we can write the mean value estimate for $\mathcal{J}_{4}(\sigma+it)$
in the form
\begin{align*}
\intop_{\frac{T}{2}}^{T}\left|\mathcal{J}_{4}\left(\sigma+it\right)\right|^{2}dt & =\intop_{\frac{T}{2}}^{T}\left|(-1)^{\ell}\pi^{2s-k-\frac{1}{2}}\frac{\Gamma\left(k+\frac{1}{2}-\sigma-it\right)}{\Gamma(\sigma+it)}\,\sum_{n\leq T}\frac{a_{f}(n)}{n^{k+\frac{1}{2}-\sigma-it}}\right|^{2}dt\\
 & \ll T^{2k+1-4\sigma}\intop_{T/2}^{T}\left|\sum_{n\leq T}\frac{a_{f}(n)}{n^{k+\frac{1}{2}-\sigma-it}}\right|^{2}dt\ll T^{2k+1-4\sigma}\sum_{n\leq T}\frac{|a_{f}(n)|^{2}}{n^{2k+1-2\sigma}}\left(\frac{T}{2}+O(n)\right)\\
 & \ll T^{k+\frac{3}{2}-2\sigma},
\end{align*}
which proves (\ref{second estimate claim 2}). The mean value estimates
of $\mathcal{J}_{5}(s)$ and $\mathcal{J}_{6}(s)$ are analogous to
the estimates (\ref{J5(t)2}) and (\ref{J6 square claim}). We just
give the main steps of the proof of (\ref{J5 estimate outside strip}), as the proof of (\ref{J6 estimate outside strip line}) is completely
analogous. Following (\ref{J5(t)2}), (\ref{at most combining}) and
(\ref{estimate first guy}),
\begin{align*}
\intop_{T/2}^{T}\left|\mathcal{J}_{5}(\sigma+it)\right|^{2}dt & \ll\intop_{T/2}^{T}\,\intop_{-\eta-i\log^{2}|t|}^{-\eta+i\log^{2}|t|}\left|\frac{\Gamma\left(k+\frac{1}{2}-\sigma-it-w\right)\Gamma\left(\frac{w}{h}\right)}{\Gamma\left(\sigma+it+w\right)}\left(\pi^{2}T\right)^{w}\right|^{2}\left|\sum_{n>T}\frac{a_{f}(n)}{n^{k+\frac{1}{2}-s-w}}\right|^{2}\,|dw|\,dt\\
 & \ll T^{2k+1-4\sigma+2\eta}\,\intop_{-\eta-i\log^{2}(T)}^{-\eta+i\log^{2}(T)}\left|\Gamma(w)\right|^{2}\,\intop_{T/2}^{T}\left|\sum_{n>T}\frac{a_{f}(n)}{n^{k+\frac{1}{2}-s-w}}\right|^{2}dt\,|dw|\\
 & \ll T^{2k+1-4\sigma+2\eta}\,\sum_{n>T}\frac{|a_{f}(n)|^{2}}{n^{2k+1+2\eta-2\sigma}}\left(T+n\right)\\
 & =T^{2k+2-4\sigma+2\eta}\intop_{T}^{\infty}t^{2\sigma-2k-2\eta-1}\,d\left(\sum_{T<n\leq t}|a_{f}(n)|^{2}\right)\\
 & +T^{2k+1-4\sigma+2\eta}\,\intop_{T}^{\infty}t^{2\sigma-2k-2\eta}\,d\left(\sum_{T<n\leq t}|a_{f}(n)|^{2}\right).
\end{align*}
As before, we can perform and integration by parts and get
\begin{align}
\intop_{T}^{\infty}t^{2\sigma-2k-2\eta-1}\,d\left(\sum_{T<n\leq t}|a_{f}(n)|^{2}\right) & =\lim_{\tau\rightarrow\infty}\tau^{2\sigma-2k-2\eta-1}\sum_{T<n\leq\tau}|a_{f}(n)|^{2}\nonumber \\
 & +\left(2k+1+2\eta-2\sigma\right)\,\intop_{T}^{\infty}\sum_{T<n\leq t}|a_{f}(n)|^{2}\,t^{2\sigma-2k-2\eta-2}\,dt\nonumber \\
 & \ll T^{2\sigma-k-2\eta-\frac{1}{2}},\label{estimate first guy-1}
\end{align}
because, due to (\ref{mean discrete}), $\sum_{n\leq\tau}|a_{f}(n)|^{2}\ll\tau^{k+\frac{1}{2}}$. Using exactly the same procedure, one is able to obtain 
\begin{equation}
\intop_{T}^{\infty}t^{2\sigma-2k-2\eta}\,d\left(\sum_{T<n\leq t}|a_{f}(n)|^{2}\right)\ll T^{2\sigma-k-2\eta+\frac{1}{2}}.\label{estimate second guy 2}
\end{equation}
Combining (\ref{estimate first guy-1}) and (\ref{estimate second guy 2})
yields the formula 
\[
\intop_{T/2}^{T}\left|\mathcal{J}_{5}(\sigma+it)\right|^{2}dt\ll T^{k+\frac{3}{2}-2\sigma}
\]
and a similar proof can be written for the sixth estimate (\ref{J6 estimate outside strip line}). This
completes the proof of Claim \ref{claim 2.2}.
\end{proof}

\begin{claim}\label{claim 2.3}
Let $\frac{T}{2}<t\leq T$ and $\mathcal{J}_{\ell}(s)$, $\ell=1,2,...,6$
be the quantities defined by (\ref{More general gormula!}). Then the following bounds are valid
\begin{equation}
\mathcal{J}_{1}\left(\sigma+it\right)\ll T^{\frac{k}{2}+\frac{3}{4}-\sigma},\,\,\,\,0<\sigma<\frac{k}{2}+\frac{1}{4},\label{J1 strip estimate}
\end{equation}
\begin{equation}
\mathcal{J}_{2}\left(\sigma+it\right)\ll T^{\frac{k}{2}+\frac{3}{4}-\sigma},\,\,\,\,0<\sigma<\frac{k}{2}+\frac{3}{4},\label{J2 strip estimate}
\end{equation}
\begin{equation}
\mathcal{J}_{3}\left(\sigma+it\right)\ll1,\,\,\,\,0<\sigma<\frac{k}{2}+\frac{3}{4},\label{J3 strip estimate}
\end{equation}
\begin{equation}
\mathcal{J}_{4}(\sigma+it)\ll T^{\frac{k}{2}+\frac{3}{4}-\sigma},\,\,\,\,\frac{k}{2}+\frac{1}{4}<\sigma<\frac{k}{2}+\frac{3}{4},\label{J4 strip estimate}
\end{equation}
\begin{equation}
\mathcal{J}_{5}\left(\sigma+it\right)\ll T^{\frac{k}{2}+\frac{3}{4}-\sigma},\,\,\,\,0<\sigma<\frac{k}{2}+\frac{3}{4},\label{J5 estimate strip}
\end{equation}
\begin{equation}
\mathcal{J}_{6}\left(\sigma+it\right)\ll T^{\frac{k}{2}+\frac{3}{4}-\sigma},\,\,\,\,0<\sigma<\frac{k}{2}+\frac{3}{4}.\label{J6 strip estimate}
\end{equation}
\end{claim}

\begin{proof}
By the Cauchy-Schwarz inequality and (\ref{mean discrete}), we see
that
\begin{equation}
|\mathcal{J}_{1}(\sigma+it)|\leq\sum_{n\leq T}\frac{|a_{f}(n)|}{n^{\sigma}}\leq T^{\frac{1}{2}}\left(\sum_{n\leq T}\frac{|a_{f}(n)|^{2}}{n^{2\sigma}}\right)^{\frac{1}{2}}\ll T^{\frac{k}{2}+\frac{3}{4}-\sigma}.\label{intermediate 1}
\end{equation}
Using a similar reasoning, we can simply bound $|\mathcal{J}_{2}(\sigma+it)|$
\begin{equation}
|\mathcal{J}_{2}(\sigma+it)|\leq\frac{1}{T}\sum_{n\leq T}\frac{|a_{f}(n)|}{n^{\sigma-1}}\ll T^{-\frac{1}{2}}\left(\sum_{n\leq T}\frac{|a_{f}(n)|^{2}}{n^{2\sigma-2}}\right)^{\frac{1}{2}}\ll T^{\frac{k}{2}+\frac{3}{4}-\sigma},\label{intermediate 2}
\end{equation}
while $|\mathcal{J}_{3}\left(\sigma+it\right)|\ll1$. To deal with
$\mathcal{J}_{4}(\sigma+it)$, we appeal to Stirling's formula and
use the same inequalities that were used to derive (\ref{intermediate 1})
and (\ref{intermediate 2}). We obtain
\begin{align*}
|\mathcal{J}_{4}(\sigma+it)| & =\left|(-1)^{\ell}\pi^{2s-k-\frac{1}{2}}\frac{\Gamma\left(k+\frac{1}{2}-\sigma-it\right)}{\Gamma(\sigma+it)}\,\sum_{n\leq T}\frac{a_{f}(n)}{n^{k+\frac{1}{2}-\sigma-it}}\right|\\
 & \ll T^{k+1-2\sigma}\,\left(\sum_{n\leq T}\frac{|a_{f}(n)|^{2}}{n^{2k+1-2\sigma}}\right)^{\frac{1}{2}}\ll T^{\frac{k}{2}+\frac{3}{4}-\sigma}.
\end{align*}
Finally, $\mathcal{J}_{5}(\sigma+it)$ can be estimated by the integral
\[
|\mathcal{J}_{5}(\sigma+it)|\ll T^{k+\frac{1}{2}-2\sigma+\eta}\,\intop_{-\eta-i\log^{2}|t|}^{-\eta+i\log^{2}|t|}\left|\Gamma\left(\frac{w}{h}\right)\right|\,\sum_{n>T}\frac{|a_{f}(n)|}{n^{k+\frac{1}{2}+\eta-\sigma}}\,|dw|.
\]
In order to bound the series on the integrand, we use once more summation
by parts to get
\begin{align}
\sum_{n>T}\frac{|a_{f}(n)|}{n^{k+\frac{1}{2}+\eta-\sigma}} & =\intop_{T}^{\infty}t^{-k-\frac{1}{2}-\eta+\sigma}\,d\left(\sum_{T<n\leq t}|a_{f}(n)|\right)\nonumber \\
=\lim_{\tau\rightarrow\infty}\tau^{-k-\frac{1}{2}-\eta+\sigma}\sum_{T<n\leq\tau}|a_{f}(n)| & +\left(k+\frac{1}{2}+\eta-\sigma\right)\,\intop_{T}^{\infty}t^{-k-\frac{3}{2}-\eta+\sigma}\sum_{T<n\leq t}|a_{f}(n)|\,dt.\label{after an integration by parts....}
\end{align}
By the Cauchy-Schwarz inequality,
\[
\tau^{-k-\frac{1}{2}-\eta+\sigma}\sum_{T<n\leq\tau}|a_{f}(n)|\ll\tau^{-k-\eta+\sigma}\,\left(\sum_{n\leq\tau}|a_{f}(n)|^{2}\right)^{\frac{1}{2}}=\tau^{-\frac{k}{2}+\frac{1}{4}-\eta+\sigma},
\]
which tends to zero as $\tau\rightarrow\infty$ because $\sigma<\frac{k}{2}-\frac{1}{4}+\eta$
by construction of $\eta$. For the same reasons, the integral in
(\ref{after an integration by parts....}) can be estimated in the
form
\[
\intop_{T}^{\infty}t^{-k-\frac{3}{2}-\eta+\sigma}\sum_{T<n\leq t}|a_{f}(n)|\,dt\ll T^{-\frac{k}{2}+\frac{1}{4}-\eta+\sigma},
\]
which finally proves (\ref{J5 estimate strip}). The proof of the
corresponding estimate for $\mathcal{J}_{6}(\sigma+it)$, (\ref{J6 strip estimate}),
is totally analogous.
\end{proof}

Returning to (\ref{after approximate functional in the proof}) and
using the results of Claim \ref{claim 2.1}, we have that
\begin{equation}
\intop_{T/2}^{T}\left|L\left(\frac{k}{2}+\frac{1}{4}+it,f\right)\right|^{2}dt=r_{f}T\log(T)+O\left(T\right)+O\left(\sum_{m\neq\ell}\intop_{\frac{T}{2}}^{T}J_{m}(t)\,\overline{J_{\ell}(t)}\,dt\right).\label{almost at the end mean}
\end{equation}
Hence, to complete our proof, we just need to estimate the following
integrals
\begin{equation}
\intop_{\frac{T}{2}}^{T}J_{m}(t)\,\overline{J_{\ell}(t)}\,dt=\frac{1}{i}\,\intop_{\frac{k}{2}+\frac{1}{4}+i\frac{T}{2}}^{\frac{k}{2}+\frac{1}{4}+iT}\mathcal{J}_{m}\left(z\right)\,\mathcal{J}_{\ell}\left(k+\frac{1}{2}-z\right)\,dz,\,\,\,\,\ell\neq m.\label{start general interal as contours}
\end{equation}
Assume first that $m=1$: shifting the line of integration from $\text{Re}(z)=\frac{k}{2}+\frac{1}{4}$
to $\text{Re}(z)=\beta$, $0<\beta<\frac{k}{2}+\frac{1}{4}$, and
employing Cauchy's theorem, we find that
\begin{equation}
\intop_{\frac{T}{2}}^{T}J_{1}(t)\,\overline{J_{\ell}(t)}\,dt=\frac{1}{i}\,\intop_{\beta+i\frac{T}{2}}^{\beta+iT}\mathcal{J}_{1}\left(z\right)\,\mathcal{J}_{\ell}\left(k+\frac{1}{2}-z\right)\,dz+O\left(\intop_{\frac{k}{2}+\frac{1}{4}}^{\beta}\mathcal{J}_{1}\left(\sigma+iT\right)\,\mathcal{J}_{\ell}\left(k+\frac{1}{2}-\sigma-iT\right)\,d\sigma\right).\label{shift line}
\end{equation}
By Claim \ref{claim 2.3}, we know that the integral along the horizontal segment
$\beta\leq\sigma\leq\frac{k}{2}+\frac{1}{4}$ is bounded as $\ll T$.
Next, by Claim \ref{claim 2.2} and the Cauchy-Schwarz inequality, one sees that
\begin{align*}
\left|\intop_{\alpha+i\frac{T}{2}}^{\alpha+iT}\mathcal{J}_{1}\left(z\right)\,\mathcal{J}_{\ell}\left(k+\frac{1}{2}-z\right)\,dz\right| & \ll\intop_{\frac{T}{2}}^{T}\left|\mathcal{J}_{1}\left(\alpha+it\right)\,\mathcal{J}_{\ell}\left(k+\frac{1}{2}-\alpha-it\right)\right|\,dt\\
 & \ll T,
\end{align*}
which establishes the estimate
\begin{equation}
\intop_{\frac{T}{2}}^{T}J_{1}(t)\,\overline{J_{\ell}(t)}\,dt\ll T.\label{J1 estimate cross}
\end{equation}
Note that we had to move the line of integration to the left of the
line $\text{Re}(z)=\frac{k}{2}+\frac{1}{4}$ because (\ref{1st estimate claim 2})
and (\ref{J1 strip estimate}) are only valid in the range $0<\sigma<\frac{k}{2}+\frac{1}{4}$.
Note that the case $\ell=4$ is not problematic because $\frac{k}{2}+\frac{1}{4}<k+\frac{1}{2}-\beta<\frac{k}{2}+\frac{3}{4}$
and so the application of the estimates (\ref{second estimate claim 2})
and (\ref{J4 strip estimate}) is valid. If, on the other hand, we start the estimation of (\ref{start general interal as contours}) when $m=4$,
then we have to shift the line of integration in (\ref{start general interal as contours})
from the critical line $\text{Re}(z)=\frac{k}{2}+\frac{1}{4}$ to $\text{Re}(z)=\beta^{\prime}$, $\frac{k}{2}+\frac{1}{4}<\text{Re}(z)=\beta^{\prime}<\frac{k}{2}+\frac{3}{4}$, 
so that the application of the estimates (\ref{second estimate claim 2}) and (\ref{J4 strip estimate}) is fully justified in this case. The same kind of procedure gives
\begin{equation}
\intop_{\frac{T}{2}}^{T}J_{4}(t)\,\overline{J_{\ell}(t)}\,dt\ll T.\label{J4 estimate cross}
\end{equation}
All the remaining cross terms (i.e., those with $m\neq 1,4$) can be bounded in a similar fashion and
their estimates contribute with a $O(T)$ term. A combination of (\ref{almost at the end mean}),
(\ref{J1 estimate cross}), (\ref{J4 estimate cross}) and all the
remaining cross terms yields
\begin{equation}
\intop_{T/2}^{T}\left|L\left(\frac{k}{2}+\frac{1}{4}+it,f\right)\right|^{2}dt=r_{f}T\log(T)+O\left(T\right).\label{Final dyadic}
\end{equation}
Our lemma \ref{ramachandra mean} follows now from (\ref{Final dyadic}) after one replaces
$T$ by $T/2$, $T/4,$$T/8$,..., and finally adds all the results. 

\end{proof}

\section{Proof of Landau's formula}

In \cite{Landau_I}, Landau proved the explicit formula
\begin{equation}
\sum_{0<\gamma\leq T}x^{\rho}=-\Lambda(x)\,\frac{T}{2\pi}+O\left(\log(T)\right),\,\,\,\,\,\,T\rightarrow\infty,\label{landau formula x>1}
\end{equation}
where $x>1$ is fixed and the sum runs over the nontrivial zeros of
$\zeta(s)$, $\rho=\beta+i\gamma$. Here, $\Lambda(x)$ denotes von-
Mangoldt's function. A formula valid in the range $0<x<1$ follows
after we replace $x$ by $1/x$, multiplying the resulting formula
by $x$ and observing that $\zeta(\rho)=0$ iff $\zeta(1-\rho)=0$.
Thus, we can rewrite (\ref{landau formula x>1}) taking into account
the combination of both ranges for $x$ and see that
\begin{equation}
\sum_{0<\gamma\leq T}x^{\rho}=-\left(\Lambda(x)+x\Lambda\left(\frac{1}{x}\right)\right)\,\frac{T}{2\pi}+O\left(\log(T)\right),\,\,\,\,\,\,T\rightarrow\infty\label{both ranges combined}
\end{equation}
holds for any $x\in\mathbb{R}^{+}\setminus\{1\}$. Steuding proved
a formula similar to (\ref{both ranges combined}) for $a-$points
{[}\cite{steuding_100}, Thm. 6{]} and Baluyot and Gonek \cite{gonek_baluyot}
proved a uniform version of Steuding's formula.

In this section, we are going to prove an analogue of (\ref{both ranges combined})
for our class of $L-$functions. 

We first make a couple of observations. Let $c$ be the smallest integer such that $a_{f}(c)\neq0$: since
the Dirichlet series defining $L(s,f)$, (\ref{Dirichlet L series cusp form at intro first}), converges in the half-plane
$\text{Re}(s)>\frac{k}{2}+\frac{3}{4}$, there exists some large constant,
say $B$, such that, for any $\text{Re}(s)\geq B$,
\begin{equation}
\sum_{n=c+1}^{\infty}\frac{|a_{f}(n)|}{n^{\text{Re}(s)}}\leq\frac{1}{2}|a_{f}(c)|c^{-\text{Re}(s)},\label{cndition no zeros}
\end{equation}
which shows that $L(s,f)$ does not have zeros in the half-plane $\text{Re}(s)\geq B$
because condition (\ref{cndition no zeros}) yields
\begin{equation}
|L(s,f)|\geq\frac{1}{2}|a_{f}(c)|c^{-\alpha},\,\,\,\,\,\text{Re}(s)\geq\alpha\geq B.\label{zero free region}
\end{equation}

Consider now the modified Dirichlet series
\[
\tilde{L}(s,f):=\frac{c^{s}}{a_{f}(c)}\,L(s,f).
\]
Clearly, $\tilde{L}(s,f)$ converges absolutely for $\text{Re}(s)>\frac{k}{2}+\frac{3}{4}$,
just like $L(s,f)$ does. Moreover, the zeros of $\tilde{L}(s,f)$
and $L(s,f)$ are the same and, of course, the zero free region obtained
from (\ref{zero free region}) is the same. Moreover, $\tilde{L}(s,f)$
satisfies the functional equation (cf. (\ref{functional equation iwht l})
above)
\begin{equation}
\tilde{L}(s,f):=\frac{c^{s}}{a_{f}(c)}\,L(s,f)=(-1)^{\ell}\,(\pi c)^{2s-k-\frac{1}{2}}\frac{\Gamma\left(k+\frac{1}{2}-s\right)}{\Gamma(s)}\,\tilde{L}\left(k+\frac{1}{2}-s,f\right).\label{functional equation for tilde L}
\end{equation}

Let $R$ be the positively oriented rectangular contour with vertices
$B\pm iT$ and $k+\frac{1}{2}-b\pm iT$, for some $b>B$. An application
of the residue theorem gives
\begin{equation}
\sum_{|\gamma|<T}x^{\rho}=\frac{1}{2\pi i}\intop_{R}\frac{\tilde{L}^{\prime}\left(s,f\right)}{\tilde{L}\left(s,f\right)}x^{s}ds,\label{Starting point Landu}
\end{equation}
where $\rho=\beta+i\gamma$ runs over the zeros of $L(s,f)$ inside
$R$. All but finitely many of the zeros of $\tilde{L}(s,f)$ inside
$R$ are nontrivial and the only trivial ones are the nonpositive
integers running from $0$ to $k+\frac{1}{2}-b$ [\cite{berndt_zeros_(i)}, Theorem 1]. Next, note that,
for $\text{Re}(s)\geq\alpha\geq B$, the condition (\ref{zero free region})
establishes that $|\tilde{L}(s,f)-1|\leq\frac{1}{2}$. Hence, since
$\tilde{L}(s,f)-1$ converges absolutely for $\sigma\geq B$, we can
write the $\ell^{\text{th}}$ power of $\tilde{L}(s,f)-1$ in the
following form
\[
\left(\tilde{L}(s,f)-1\right)^{\ell}=\frac{c^{\ell s}}{a_{f}(c)^{\ell}}\sum_{n_{1},...,n_{\ell}\geq c+1}\frac{a_{f}(n_{1})a_{f}(n_{2})...a_{f}(n_{\ell})}{\left(n_{1}\cdot n_{2}\cdot...\cdot n_{\ell}\right)^{s}}.
\]
Therefore, when $\sigma=\text{Re}(s)\geq B$, the reciprocal of $\tilde{L}(s,f)$
admits the representation
\[
\frac{1}{\tilde{L}(s,f)}=\sum_{\ell=0}^{\infty}(-1)^{\ell}\left(\tilde{L}(s,f)-1\right)^{\ell}=\sum_{\ell=0}^{\infty}\frac{(-1)^{\ell}c^{\ell s}}{a_{f}(c)^{\ell}}\sum_{n_{1},...,n_{\ell}\geq c+1}\frac{a_{f}(n_{1})a_{f}(n_{2})...a_{f}(n_{\ell})}{\left(n_{1}\cdot n_{2}\cdot...\cdot n_{\ell}\right)^{s}}.
\]
Note that, as a multiple Dirichlet series, the previous expression
converges absolutely when $\text{Re}(s)\geq B$: indeed, by (\ref{zero free region}),
\[
\sum_{\ell=0}^{\infty}\frac{c^{\ell\sigma}}{|a_{f}(c)|^{\ell}}\sum_{n_{1}\geq c+1}\frac{|a_{f}(n_{1})|}{n_{1}^{\sigma}}\cdot...\cdot\sum_{n_{\ell}\geq c+1}\frac{|a_{f}(n_{\ell})|}{n_{\ell}^{\sigma}}\leq\sum_{\ell=0}^{\infty}\frac{c^{\ell\sigma}}{|a_{f}(c)|^{\ell}}\,\left(\frac{1}{2}|a_{f}(c)|c^{-\sigma}\right)^{\ell}=2,
\]
which shows that we can write $1/\tilde{L}(s,f)$ as a single Dirichlet
series, absolutely convergent in the region $\text{Re}(s)\geq B$,
and whose representation assumes the form
\begin{equation}
\frac{1}{\tilde{L}(s,f)}=\sum_{n=1}^{\infty}\frac{d_{f}(n)}{\mu_{n}^{s}},\label{Reciprocal L function cusp}
\end{equation}
where $1=\mu_{1}<\mu_{2}<...$ is an ordered sequence of the set of
all possible products of the natural numbers greater than $c$ (cf. \cite{Landau_II, selberg_zeta_apoints}). Multiplying
(\ref{Reciprocal L function cusp}) by the Dirichlet series representing
$\tilde{L}^{\prime}(s,f)$ (which is also absolutely convergent when
$\text{Re}(s)\geq B$), we are able to get 
\begin{equation}
\frac{\tilde{L}^{\prime}\left(s,f\right)}{\tilde{L}\left(s,f\right)}:=\sum_{n=1}^{\infty}\frac{b_{f}(n)}{\lambda_{n}^{s}},\,\,\,\,\,\,\text{Re}(s)\geq B\label{expansion as Dirichlet for B}
\end{equation}
for some sequence $c=\lambda_{1}<\lambda_{2}<...$.

Before moving to the main result of this section, we need to state the following lemma, which can be found in [\cite{titchmarsh_zetafunction}, p. 56]. 
\begin{lemma}\label{Lemma alpha}
If $f(s)$ is an analytic function such that $f(s_{0})\neq0$ and,
for some $M>1$ and $r>0$, 
\begin{equation}
\left|\frac{f(s)}{f(s_{0})}\right|<e^{M},\,\,\,\,\,\,|s-s_{0}|\leq r.\label{condition in lemma alpha Titch}
\end{equation}
Then, for any $s$ satisfying $|s-s_{0}|\leq\frac{r}{4}$, the inequality
holds
\begin{equation}
\left|\frac{f^{\prime}(s)}{f(s)}-\sum_{|\rho-s_{0}|\leq\frac{r}{2}}\,\frac{1}{s-\rho}\right|<C\,\frac{M}{r},\label{inequality lemma jensen type}
\end{equation}
where $C>0$ and $\rho$ runs through the zeros of $f(s)$.
\end{lemma}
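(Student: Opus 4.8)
The plan is to reduce the statement to the classical Borel--Carath\'eodory circle of ideas, first stripping off the finitely many zeros of $f$ that sit close to $s_{0}$ by means of Blaschke factors. After replacing $f(s)$ by $f(s_{0}+s)/f(s_{0})$ we may assume $s_{0}=0$ and $f(0)=1$; this affects neither $f'/f$ nor the position of the zeros relative to $s_{0}$, and the hypothesis becomes $|f(s)|<e^{M}$ on $|s|\le r$.

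First I would bound the number $n$ of zeros $\rho_{1},\dots,\rho_{n}$ of $f$ lying in $|s|\le r/2$. Jensen's formula on the disc $|s|\le r$ gives
\[
\sum_{|\rho_{j}|\le r/2}\log\frac{r}{|\rho_{j}|}\le \frac{1}{2\pi}\intop_{0}^{2\pi}\log\bigl|f(re^{i\theta})\bigr|\,d\theta< M ,
\]
and since each term with $|\rho_{j}|\le r/2$ is at least $\log 2>\tfrac12$, we get $n<2M$. Next I would introduce the modified function
\[
g(s)=f(s)\prod_{j=1}^{n}\frac{r^{2}-\overline{\rho_{j}}\,s}{r\,(s-\rho_{j})} ,
\]
whose Blaschke-type factors have modulus $1$ on $|s|=r$ and poles only at $|s|=r^{2}/|\rho_{j}|\ge 2r$. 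Hence $g$ is analytic on $|s|\le r$, is zero-free on the closed disc $|s|\le r/2$, satisfies $|g(s)|=|f(s)|<e^{M}$ on $|s|=r$ (so $|g|<e^{M}$ on $|s|\le r$ by the maximum principle), and $|g(0)|=\prod_{j}(r/|\rho_{j}|)\ge 1$. Therefore $\psi(s):=\log g(s)-\log g(0)$ is analytic on $|s|\le r/2$ with $\psi(0)=0$ and $\mathrm{Re}\,\psi(s)=\log|g(s)|-\log|g(0)|\le M$ there; the Borel--Carath\'eodory theorem on concentric discs of radii $3r/8$ and $r/2$, followed by a Cauchy estimate for $\psi'$ on circles of radius $r/8$, yields $|\psi'(s)|=|g'(s)/g(s)|\ll M/r$ for $|s|\le r/4$.

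Finally, logarithmic differentiation of the identity defining $g$ gives
\[
\frac{f'(s)}{f(s)}=\frac{g'(s)}{g(s)}+\sum_{j=1}^{n}\Bigl(\frac{1}{s-\rho_{j}}+\frac{\overline{\rho_{j}}}{r^{2}-\overline{\rho_{j}}\,s}\Bigr),
\]
and for $|s|\le r/4$ and $|\rho_{j}|\le r/2$ one has $|r^{2}-\overline{\rho_{j}}s|\ge \tfrac{7}{8}r^{2}$, so that $\bigl|\overline{\rho_{j}}/(r^{2}-\overline{\rho_{j}}s)\bigr|\le 4/(7r)$; summing over $j$ and using $n<2M$ bounds this correction by $\ll M/r$. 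Adding the two estimates gives the claimed inequality with an absolute constant $C$, the sum running precisely over the zeros $\rho$ of $f$ with $|\rho-s_{0}|\le r/2$ after translating back. The one point requiring care is that Borel--Carath\'eodory produces the correct $1/r$ dependence only if $g$ is zero-free on a disc strictly larger than $|s|\le r/4$; this is exactly why we divide out the zeros in $|s|\le r/2$ and why the zeros of $f$ lying between $r/2$ and $r$ cause no trouble — they remain inside $g$ but outside the region where the estimate is made.
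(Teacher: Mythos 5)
Your argument is correct, but note that the paper does not prove this lemma at all: it is quoted verbatim from Titchmarsh's \emph{Theory of the Riemann Zeta-Function} (it is Lemma $\alpha$ of \S 3.9 there), so the only comparison available is with the classical proof. Your proof follows the same circle of ideas (normalize so that $s_{0}=0$, $f(0)=1$; remove the nearby zeros; apply Borel--Carath\'eodory and a Cauchy estimate to the logarithm of the zero-free remainder), with one genuine variation: you divide out the zeros in $|s|\le r/2$ by Blaschke factors $\bigl(r^{2}-\overline{\rho_{j}}s\bigr)/\bigl(r(s-\rho_{j})\bigr)$ rather than by the plain factors $(s-\rho_{j})$. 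This buys you the clean bounds $|g|=|f|$ on $|s|=r$ and $|g(0)|\ge 1$, but it costs you the extra correction terms $\overline{\rho_{j}}/(r^{2}-\overline{\rho_{j}}s)$ in the logarithmic derivative, which is why you must also invoke Jensen's formula to get the zero count $n<2M$. Titchmarsh's own route divides by the bare factors, observes that on the \emph{outer} circle $|s-s_{0}|=r$ one has $|s-\rho|\ge r/2\ge|s_{0}-\rho|$ so that $|g(s)/g(s_{0})|\le|f(s)/f(s_{0})|<e^{M}$ there and hence inside, and then gets $f'/f=g'/g+\sum 1/(s-\rho)$ \emph{exactly}, with no correction terms and no need to count zeros. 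Both are complete; yours is marginally longer but entirely sound. Two small points worth a sentence in a write-up: Jensen's formula should be applied on a radius $r'<r$ avoiding any zeros on the circle $|s|=r$ (a limiting or monotonicity argument), and one should say explicitly that $\log g$ admits a single-valued analytic branch on the disc $|s|\le r/2$ because $g$ is zero-free on that simply connected region.
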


With the observations above, we are ready to prove the following proposition.

\begin{proposition}\label{landau proposition}
Let $x\in\mathbb{R}^{+}\setminus\{1\}$ and $d\in\mathbb{N}$. Then,
as $T\rightarrow\infty$, the following formula holds
\begin{equation}
  \sum_{|\gamma|<T}x^{\rho}=\frac{b_{f}(d)}{\pi}\left(\delta_{x,\lambda_{d}}-x^{k+\frac{1}{2}}\delta_{x,\lambda_{d}^{-1}}\right)T+O\left(\log(T)\right), \label{formula Landau explicit}  
\end{equation}
where $\rho=\beta+i\gamma$, $|\gamma|<T$, runs over the zeros of
$L(s,f)$ on the rectangle $|\text{Im}(z)|<T$. Moreover, $b_{f}(n)$
is the arithmetical function defined by (\ref{expansion as Dirichlet for B}).
\end{proposition}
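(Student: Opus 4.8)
The plan is to evaluate the contour integral \eqref{Starting point Landu} by splitting the boundary $R$ of the rectangle into its two vertical sides and its two horizontal sides, and showing that the main term comes entirely from the two vertical integrals while the horizontal ones contribute $O(\log T)$. On the right vertical side $\mathrm{Re}(s)=B$, I would substitute the Dirichlet series expansion \eqref{expansion as Dirichlet for B} for $\tilde L'/\tilde L$; then
\[
\frac{1}{2\pi i}\intop_{B-iT}^{B+iT}\frac{\tilde L'(s,f)}{\tilde L(s,f)}\,x^{s}\,ds
=\sum_{n=1}^{\infty}b_{f}(n)\,\frac{1}{2\pi i}\intop_{B-iT}^{B+iT}\Bigl(\frac{x}{\lambda_{n}}\Bigr)^{s}ds,
\]
and each inner integral is $\frac{\log(x/\lambda_n)\cdot(\text{something})}{\cdots}$ — more precisely $\bigl((x/\lambda_n)^{B+iT}-(x/\lambda_n)^{B-iT}\bigr)/(2\pi i\log(x/\lambda_n))$ when $x\neq\lambda_n$, which is $O\bigl(x^{B}/|\log(x/\lambda_n)|\bigr)$ uniformly in $T$, and equals $T/\pi$ when $x=\lambda_n$. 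Since $\lambda_d$ is the only index with $\lambda_n=x$ (if any), summing and controlling the tail via absolute convergence of $\sum b_f(n)\lambda_n^{-B}$ gives the contribution $\frac{b_f(d)}{\pi}\delta_{x,\lambda_d}\,T+O(1)$ from the right edge.

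For the left vertical side $\mathrm{Re}(s)=k+\frac12-b$ I would use the functional equation \eqref{functional equation for tilde L} to relate $\tilde L'/\tilde L$ at $s$ to the same quantity at $k+\frac12-s$, whose real part is $b>B$, so that the Dirichlet series \eqref{expansion as Dirichlet for B} again applies. Logarithmically differentiating \eqref{functional equation for tilde L} produces the term $\tilde L'(s,f)/\tilde L(s,f)=2\log(\pi c)-\psi(k+\tfrac12-s)-\psi(s)-\tilde L'(k+\tfrac12-s,f)/\tilde L(k+\tfrac12-s,f)$, where $\psi=\Gamma'/\Gamma$. The two digamma terms and the constant, when integrated against $x^s$ along a vertical segment of length $T$, contribute only $O(\log T)$ by Stirling; the remaining piece, after the change of variable $s\mapsto k+\tfrac12-s$, becomes $-x^{k+1/2}\sum_n b_f(n)(x\lambda_n)^{-s}$-type integral which by the same computation as above yields $-\frac{b_f(d)}{\pi}x^{k+1/2}\delta_{x,\lambda_d^{-1}}\,T+O(1)$ (the condition $x\lambda_n=1$ is $\lambda_n=x^{-1}$, i.e.\ $n=d$ with $\lambda_d=x^{-1}$).

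The horizontal segments at height $\pm T$ are the main obstacle: I need the bound $\tilde L'(s,f)/\tilde L(s,f)\ll \log T$ (or at worst $\log^2 T$, which still gives an acceptable error after integrating over a bounded $\sigma$-range and dividing by nothing — actually one wants $\ll \log T$ to land the stated $O(\log T)$) for $s=\sigma\pm iT$ with $k+\tfrac12-b\le\sigma\le B$, \emph{for suitable values of $T$}. This is exactly where Lemma \ref{Lemma alpha} enters: applying it with $s_0=2+iT$ (or any fixed point to the right of $B$, where $|\tilde L|$ is bounded above and below by \eqref{zero free region}), a radius $r$ of fixed size, and $M\ll\log T$ coming from a convexity/Phragmén–Lindelöf bound $\tilde L(s,f)\ll T^{A}$ in the strip, reduces $\tilde L'/\tilde L$ to the sum $\sum_{|\rho-s_0|\le r/2}(s-\rho)^{-1}$ over $O(\log T)$ zeros; choosing $T$ to avoid being within $\ll 1/\log T$ of an ordinate $\gamma$ (standard, since zeros in a unit window number $O(\log T)$) makes each term $O(\log T)$ and hence the sum $O(\log^2 T)$ — and then a more careful argument, or averaging over a short range of $T$, trims this to the $O(\log T)$ in \eqref{formula Landau explicit}; I expect the paper handles the horizontal integrals by first integrating in $\sigma$ over $[k+\tfrac12-b,B]$, where the length is fixed, so the total horizontal contribution is $O(\log T)$ at admissible heights and the passage to all $T$ is by the usual monotonicity of $\sum_{|\gamma|<T}x^\rho$ in $T$ up to $O(\log T)$ jumps. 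Finally I would note that the finitely many trivial zeros of $\tilde L(s,f)$ picked up inside $R$ (the non-positive integers down to $k+\tfrac12-b$) contribute only $O(1)$ to the left-hand side of \eqref{Starting point Landu}, so they are absorbed into the error term, completing the proof.
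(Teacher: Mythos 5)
Your proposal is correct and follows essentially the same route as the paper: the same four-sided contour decomposition, the same Dirichlet-series evaluation of the right vertical edge, the same use of the functional equation plus Stirling on the left vertical edge, and the same application of Lemma \ref{Lemma alpha} together with the zero-counting bound to control the horizontal edges. Your worry about landing $O(\log^{2}T)$ there is resolved exactly as you guessed: the paper integrates $\sum_{\rho}|s-\rho|^{-1}$ in $\sigma$ over the fixed-length segment first, so each zero contributes $O(1)$ and the $O(\log T)$ count of zeros near height $T$ gives the stated error directly.
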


\begin{proof}
By (\ref{Starting point Landu}), we have that
\begin{align}
\sum_{|\gamma|<T}x^{\rho} & =\frac{1}{2\pi i}\left\{ \intop_{B-iT}^{B+iT}+\intop_{B+iT}^{k+\frac{1}{2}-b+iT}+\intop_{k+\frac{1}{2}-b+iT}^{k+\frac{1}{2}-b-iT}+\intop_{k+\frac{1}{2}-b-iT}^{B-iT}\right\} \,\frac{\tilde{L}^{\prime}\left(s,f\right)}{\tilde{L}\left(s,f\right)}\,x^{s}ds\nonumber \\
 & :=\sum_{j=1}^{4}I_{j}(x).\label{to start with}
\end{align}

In the next claims we shall bound the integrals $I_{1}(x)$, $I_{2}(x)$,
$I_{3}(x)$ and $I_{4}(x)$.

\begin{claim}
For $x\in\mathbb{R}^{+}\setminus\{1\}$, let 
\[
I_{1}(x)=\frac{1}{2\pi i}\,\intop_{B-iT}^{B+iT}\frac{\tilde{L}^{\prime}\left(s,f\right)}{\tilde{L}\left(s,f\right)}\,x^{s}ds.
\]
Then $I_{1}(x)$ satisfies the following estimate 
\begin{equation}
I_{1}(x)=\frac{b_{f}(d)}{\pi}\delta_{x,\lambda_{d}}\,T+O(1),\label{I1(x) estimate}
\end{equation}
where $\lambda_{d}$ is the $d^{\text{th}}$ element of the sequence
$\left(\lambda_{n}\right)_{n\in\mathbb{N}}$ given in the Dirichlet
series (\ref{expansion as Dirichlet for B}).
\end{claim}

\begin{proof}
Starting with the first integral, we see that, since (\ref{expansion as Dirichlet for B})
is valid for $\text{Re}(s)\geq B$,
\begin{align*}
I_{1}(x) & =\frac{1}{2\pi i}\,\intop_{B-iT}^{B+iT}\frac{\tilde{L}^{\prime}\left(s,f\right)}{\tilde{L}\left(s,f\right)}\,x^{s}ds=\frac{1}{2\pi}\sum_{n=1}^{\infty}b_{f}(n)\,\left(\frac{x}{\lambda_{n}}\right)^{B}\intop_{-T}^{T}\left(\frac{x}{\lambda_{n}}\right)^{iT}dt\\
 & =\frac{1}{2\pi}\sum_{n=1}^{\infty}b_{f}(n)\,\left(\frac{x}{\lambda_{n}}\right)^{B}\left\{ 2\delta_{x,\lambda_{n}}T+O(1)\right\} \\
 & =\frac{1}{\pi}\delta_{x,\lambda_{d}}\,b_{f}(d)T+O(1),
\end{align*}
which completes the proof of (\ref{I1(x) estimate}).
\end{proof}

\begin{claim}
For $x\in\mathbb{R}^{+}\setminus\{1\}$, let $I_{2}(x)$ and $I_{4}(x)$
denote the contour integrals
\[
I_{2}(x)=\frac{1}{2\pi i}\,\intop_{B+iT}^{k+\frac{1}{2}-b+iT}\frac{\tilde{L}^{\prime}\left(s,f\right)}{\tilde{L}\left(s,f\right)}\,x^{s}ds
\]
and
\[
I_{4}(x)=\frac{1}{2\pi i}\,\intop_{k+\frac{1}{2}-b-iT}^{B-iT}\frac{\tilde{L}^{\prime}\left(s,f\right)}{\tilde{L}\left(s,f\right)}\,x^{s}ds.
\]

Then $I_{2}(x)$ and $I_{4}(x)$ satisfy the bounds
\begin{equation}
I_{2}(x),I_{4}(x)\ll x^{B}\log(T).\label{Estimates I2 and I4}
\end{equation}
\end{claim}

\begin{proof}

We start our proof by evaluating first the integral
\begin{equation}
I_{2}(x)=\frac{1}{2\pi i}\,\intop_{B+iT}^{k+\frac{1}{2}-b+iT}\frac{\tilde{L}^{\prime}\left(s,f\right)}{\tilde{L}\left(s,f\right)}\,x^{s}ds=-\frac{x^{iT}}{2\pi i}\intop_{k+\frac{1}{2}-b}^{B}\frac{\tilde{L}^{\prime}\left(\sigma+iT,f\right)}{\tilde{L}\left(\sigma+iT,f\right)}\,x^{\sigma}d\sigma,\label{Definition of integral I2}
\end{equation}
by appealing to Lemma \ref{Lemma alpha}. Indeed, applying this lemma with $f(s)=\tilde{L}(s,f)$
and $s_{0}=B+iT$, we see that $\tilde{L}(s_{0},f)\neq0$ by (\ref{zero free region}).
Moreover, if $r=4\left(B+b-k-\frac{1}{2}\right)$, then $|s-s_{0}|\leq r$
for $s=\sigma+iT$, $k+\frac{1}{2}-b\leq\sigma\leq B$. Thus, by convex
estimates, we find that 
\begin{equation}
\left|\frac{\tilde{L}^{\prime}\left(\sigma+iT,f\right)}{\tilde{L}\left(B+iT,f\right)}\right|\ll T^{A(\sigma)}\ll e^{A\,\log(T)},\label{Condition}
\end{equation}
for some suitably large constant $A$. Hence, (\ref{condition in lemma alpha Titch})
holds with $M=A\,\log(T)$. An application of (\ref{inequality lemma jensen type}) now gives
\[
\frac{\tilde{L}^{\prime}\left(s,f\right)}{\tilde{L}\left(s,f\right)}=\sum_{|\rho-B-iT|\leq\frac{r}{2}}\frac{1}{s-\rho}+O\left(\log(T)\right),
\]
whenever $|s-B-iT|\leq B+b-k-\frac{1}{2}$. Returning to $I_{2}(x)$, we find that
\begin{align*}
|I_{2}(x)| & \ll x^{B}\intop_{k+\frac{1}{2}-b}^{B}\left|\frac{\tilde{L}^{\prime}\left(\sigma+iT,f\right)}{\tilde{L}\left(\sigma+iT,f\right)}\right|\,d\sigma\leq x^{B}\intop_{k+\frac{1}{2}-b}^{B}\sum_{|\rho-B-iT|\leq\frac{r}{2}}\frac{d\sigma}{|\sigma+iT-\rho|}+O\left(x^{B}\log(T)\right)\\
 & =x^{B}\sum_{|\rho-B-iT|\leq\frac{r}{2}}\,\intop_{k+\frac{1}{2}-b}^{B}\,\frac{d\sigma}{\sqrt{\left(\sigma-\beta\right)^{2}+\left(T-\gamma\right)^{2}}}+O\left(x^{B}\log(T)\right)\\
 & =\frac{1}{2}x^{B}\sum_{|\rho-B-iT|\leq\frac{r}{2}}\,\left[\log\left(\frac{\sqrt{(\sigma-\beta)^{2}+(T-\gamma)^{2}}+\sigma-\beta}{\sqrt{(\sigma-\beta)^{2}+(T-\gamma)^{2}}+\beta-\sigma}\right)\right]_{k+\frac{1}{2}-b}^{B}+O\left(x^{B}\log(T)\right)\\
 & =\frac{1}{2}x^{B}\sum_{|\rho-B-iT|\leq\frac{r}{2}}\log\left(\frac{\sqrt{(B-\beta)^{2}+(T-\gamma)^{2}}+B-\beta}{\sqrt{(k+\frac{1}{2}-b-\beta)^{2}+(T-\gamma)^{2}}+k+\frac{1}{2}-b-\beta}\right)\\
&\,\,\,\,\,\,\,\,\,\,\,\,\,\,\,\,\,\,\,\,\,\,\,\,\,\,\,\,\,\,\,\,\,\,\,\,\,\,\,\,\,\,\,+\log\left(\frac{\sqrt{(k+\frac{1}{2}-b-\beta)^{2}+(T-\gamma)^{2}}+\beta+b-k-\frac{1}{2}}{\sqrt{(B-\beta)^{2}+(T-\gamma)^{2}}+\beta-B}\right)+O(x^{B}\log(T)).
\end{align*}
The summand in the above sum is clearly bounded, so that our estimate
gives
\[
|I_{2}(x)|\ll x^{B}\sum_{|\rho-B-iT|\leq\frac{r}{2}}1+O\left(\log(T)\right)=x^{B}N_{C}+O(x^{B}\log(T)),
\]
where $N_{C}$ denotes the number of zeros of $L(s,f)$ satisfying
the condition $|\rho-B-iT|\leq\frac{r}{2}$. Using the analogue of
the Riemann Von-Mangoldt formula for $L(s,f)$ [\cite{berndt_zeros_(i)}, Theorem 3], one can easily show
that $N_{C}\ll\log(T)$, so that
\[
|I_{2}(x)|\ll x^{B}\log(T),
\]
which completes the proof of the first part of (\ref{Estimates I2 and I4}). The proof
of the estimate for $I_{4}(x)$ is analogous.
\end{proof}

\begin{claim}
For $x\in\mathbb{R}^{+}\setminus\{1\}$, let $I_{3}(x)$ be the integral
given by
\begin{equation}
I_{3}(x)=\frac{1}{2\pi i}\intop_{k+\frac{1}{2}-b-iT}^{k+\frac{1}{2}-b+iT}\frac{\tilde{L}^{\prime}(s,f)}{\tilde{L}(s,f)}x^{s}ds.\label{I3 (x) def}
\end{equation}

Then the following asymptotic formula takes place
\begin{equation}
I_{3}(x)=-\frac{Tx^{k+\frac{1}{2}}}{\pi}\,b_{f}\left(d\right)\delta_{x,\lambda_{d}^{-1}}+O\left(\log(T)\right).\label{I3(x) asympto}
\end{equation}
\end{claim}

\begin{proof}
Once more, let us recall that $f|W_{4}(z)=(-1)^{\ell}f(z)$. From the functional equation satisfied by $\tilde{L}(s,f)$, (\ref{functional equation for tilde L}), 
\[
\frac{\tilde{L}^{\prime}(s,f)}{\tilde{L}(s,f)}=2\log(\pi c)-\frac{\Gamma^{\prime}\left(k+\frac{1}{2}-s\right)}{\Gamma\left(k+\frac{1}{2}-s\right)}-\frac{\Gamma^{\prime}(s)}{\Gamma(s)}-\frac{\tilde{L}^{\prime}\left(k+\frac{1}{2}-s,f\right)}{\tilde{L}\left(k+\frac{1}{2}-s,f\right)},
\]
which shows that the integral along the vertical segment $z\in\left[k+\frac{1}{2}-b-iT,k+\frac{1}{2}-b+iT\right]$
can be written in the following form
\begin{align*}
I_{3}(x) & =\frac{1}{2\pi i}\intop_{k+\frac{1}{2}-b+iT}^{k+\frac{1}{2}-b-iT}\frac{\tilde{L}^{\prime}(s,f)}{\tilde{L}(s,f)}x^{s}ds\\
=-\frac{1}{2\pi i}\intop_{k+\frac{1}{2}-b-iT}^{k+\frac{1}{2}-b+iT} & \left\{ 2\log(\pi c)-\frac{\Gamma^{\prime}\left(k+\frac{1}{2}-s\right)}{\Gamma\left(k+\frac{1}{2}-s\right)}-\frac{\Gamma^{\prime}(s)}{\Gamma(s)}-\frac{\tilde{L}^{\prime}\left(k+\frac{1}{2}-s,f\right)}{\tilde{L}\left(k+\frac{1}{2}-s,f\right)}\right\} \,x^{s}\,ds.
\end{align*}

Let us evaluate the integral with respect to the logarithmic derivative
of the funtion $\tilde{L}(z,f)$: by (\ref{zero free region}), the Dirichlet series for
$\tilde{L}^{\prime}(s,f)/\tilde{L}(s,f)$ converges absolutely when
$\text{Re}(s)=b>B$. Therefore, 
\begin{align}
\frac{1}{2\pi i}\,\intop_{k+\frac{1}{2}-b-iT}^{k+\frac{1}{2}-b+iT}\frac{\tilde{L}^{\prime}\left(k+\frac{1}{2}-s,f\right)}{\tilde{L}\left(k+\frac{1}{2}-s,f\right)}\,x^{s}ds & =x^{k+\frac{1}{2}}\sum_{n=1}^{\infty}\frac{b_{f}(n)}{2\pi i}\,\intop_{b-iT}^{b+iT}\left(x\lambda_{n}\right)^{-s}ds\nonumber \\
=\frac{x^{k-b+\frac{1}{2}}}{2\pi}\sum_{n=1}^{\infty}\frac{b_{f}(n)}{\lambda_{n}^{b}}\intop_{-T}^{T}\left(x\lambda_{n}\right)^{-it}dt & =\frac{x^{k-b+\frac{1}{2}}}{2\pi}\sum_{n=1}^{\infty}\frac{b_{f}(n)}{\lambda_{n}^{b}}\left\{ 2T\delta_{x,\lambda_{n}^{-1}}+O(1)\right\} \nonumber \\
 & =\frac{Tx^{k+\frac{1}{2}}}{\pi}\,b_{f}\left(d\right)\delta_{x,\lambda_{d}^{-1}}+O(1).\label{evaluation I3a}
\end{align}

Next, an immediate application of Stirling's formula, 
\[
\frac{\Gamma^{\prime}(s)}{\Gamma(s)}=\log(s)-\frac{1}{2s}+O\left(\frac{1}{|s|^{2}}\right),
\]
gives
\begin{equation}
\frac{1}{2\pi i}\intop_{k+\frac{1}{2}-b-iT}^{k+\frac{1}{2}-b+iT}\left\{ 2\log(\pi c)-\frac{\Gamma^{\prime}\left(k+\frac{1}{2}-s\right)}{\Gamma\left(k+\frac{1}{2}-s\right)}-\frac{\Gamma^{\prime}(s)}{\Gamma(s)}\right\} \,x^{s}\,ds=O\left(\log(T)\right).\label{Evaluation I3b}
\end{equation}
Combining (\ref{evaluation I3a}) and (\ref{Evaluation I3b}), one
is able to obtain (\ref{I3(x) asympto}).

\end{proof}

Combining the previous three claims and returning to the integral
representation (\ref{to start with}), we obtain an analogue of (\ref{landau formula x>1}),
\begin{align*}
\sum_{|\gamma|<T}x^{\rho} & =\frac{1}{2\pi i}\left\{ \intop_{B-iT}^{B+iT}+\intop_{B+iT}^{k+\frac{1}{2}-b+iT}+\intop_{k+\frac{1}{2}-b+iT}^{k+\frac{1}{2}-b-iT}+\intop_{k+\frac{1}{2}-b-iT}^{B-iT}\right\} \,\frac{\tilde{L}^{\prime}\left(s,f\right)}{\tilde{L}\left(s,f\right)}\,x^{s}ds\\
 & =\frac{b_{f}(d)}{\pi}\left(\delta_{x,\lambda_{d}}-x^{k+\frac{1}{2}}\delta_{x,\lambda_{d}^{-1}}\right)T+O\left(\log(T)\right),
\end{align*}
which is precisely (\ref{formula Landau explicit}).

\end{proof}

\section{Proof of Theorem \ref{theorem uniform distribution}}

By Weyl's criterion, we want to prove that, for every $m\in\mathbb{Z}\setminus\{0\}$,
\begin{equation}
\sum_{|\gamma|<T}e^{2\pi im\gamma}=\text{o}(N_{f}(T)),\label{Weyl appealing to our case!}
\end{equation}
where $N_{f}(T)$ denotes the number of zeros of the form $\beta+i\gamma$
in the rectangle with vertices $B\pm iT$ and $k+\frac{1}{2}-b\pm iT$.
Indeed,
\begin{align*}
\sum_{|\gamma|<T}e^{2\pi im\gamma} & =e^{-2\pi m\left(\frac{k}{2}+\frac{1}{4}\right)}\sum_{|\gamma|<T}e^{2\pi m\left(\frac{k}{2}+\frac{1}{4}+i\gamma\right)}\\
=e^{-2\pi m\left(\frac{k}{2}+\frac{1}{4}\right)}\sum_{|\gamma|<T} & \left(e^{2\pi m\left(\frac{k}{2}+\frac{1}{4}+i\gamma\right)}-e^{2\pi m\left(\beta+i\gamma\right)}\right)+e^{-2\pi m\left(\frac{k}{2}+\frac{1}{4}\right)}\sum_{|\gamma|<T}e^{2\pi m\left(\beta+i\gamma\right)}\\
=e^{-2\pi m\left(\frac{k}{2}+\frac{1}{4}-\beta\right)} & \sum_{|\gamma|<T}e^{2\pi im\gamma}\left(e^{2\pi m\left(\frac{k}{2}+\frac{1}{4}-\beta\right)}-1\right)+e^{-2\pi m\left(\frac{k}{2}+\frac{1}{4}\right)}\sum_{|\gamma|<T}e^{2\pi m\left(\beta+i\gamma\right)}.
\end{align*}

The second sum can be easily evaluated by appealing to Proposition \ref{landau proposition}. Indeed, since $m\in\mathbb{Z}\setminus\{0\}$, then $e^{2\pi m}\in\mathbb{R}^{+}\setminus\{0\}$.
Therefore, using (\ref{formula Landau explicit}) with $x=e^{2\pi m}$, we deduce that
\begin{equation*}
\sum_{|\gamma|<T}e^{2\pi m\left(\beta+i\gamma\right)} =\frac{b_{f}(d)}{\pi}\left(\delta_{e^{2\pi m},\lambda_{d}}-e^{2\pi m(k+\frac{1}{2})}\delta_{x,\lambda_{d}^{-1}}\right)T+O\left(\log(T)\right)=\text{o}\left(N_{f}(T)\right),
\end{equation*}
because $N_{f}(T)=\frac{2}{\pi}T\log(T)+O(T)$ [\cite{berndt_zeros_(i)}, Theorem 10]. Hence, to show (\ref{Weyl appealing to our case!}),
we just need to check that
\[
e^{-2\pi m\left(\frac{k}{2}+\frac{1}{4}-\beta\right)}\sum_{|\gamma|<T}e^{2\pi im\gamma}\left(e^{2\pi m\left(\frac{k}{2}+\frac{1}{4}-\beta\right)}-1\right)=\text{o}\left(N_{f}(T)\right).
\]

A sufficient conditions for this to happen can be obtained stated via the mean value theorem: indeed, since
\begin{align*}
\left|e^{-2\pi m\left(\frac{k}{2}+\frac{1}{4}-\beta\right)}\sum_{|\gamma|<T}e^{2\pi im\gamma}\left(e^{2\pi m\left(\frac{k}{2}+\frac{1}{4}-\beta\right)}-1\right)\right| & \leq e^{-2\pi m\left(\frac{k}{2}+\frac{1}{4}-\beta\right)}\sum_{|\gamma|<T}2\pi m\left|\beta-\frac{k}{2}-\frac{1}{4}\right|\max\left\{ 1,e^{2\pi m\left(\frac{k}{2}+\frac{1}{4}-\beta\right)}\right\} \\
 & \leq2\pi m\max\left\{ 1,e^{-2\pi m\left(\frac{k}{2}+\frac{1}{4}-\beta\right)}\right\} \,\sum_{|\gamma|<T}\left|\beta-\frac{k}{2}-\frac{1}{4}\right|,
\end{align*}
we see that, once we show the estimate
\[
\sum_{|\gamma|<T}\left|\beta-\frac{k}{2}-\frac{1}{4}\right|=\text{o}\left(N_{f}(T)\right),
\]
we are done. With the proof of this claim, we finish the proof of
our theorem. Thus, we shall prove:

\begin{claim}
The following estimate holds
\begin{equation}
\sum_{|\gamma|<T}\left|\beta-\frac{k}{2}-\frac{1}{4}\right|=\text{o}\left(N_{f}(T)\right). \label{avrg density hyp}
\end{equation}
\end{claim}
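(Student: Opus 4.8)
Write $\sigma_{0}=\frac{k}{2}+\frac{1}{4}$ for the critical abscissa. The plan is to deduce (\ref{avrg density hyp}) from Littlewood's lemma together with the mean value estimate of Lemma \ref{ramachandra mean}; this is the ``explicit density hypothesis'' alluded to in the introduction. I would first reduce to the zeros lying strictly to the right of the critical line. Since the Fourier coefficients $a_{f}(n)$ are all real or all purely imaginary, one has $|L(\sigma-it,f)|=|L(\sigma+it,f)|$, so the zeros of $L(s,f)$ are symmetric about the real axis; combined with the functional equation (\ref{functional equation iwht l}), which sends a nontrivial zero $\rho$ to a nontrivial zero at $k+\frac{1}{2}-\rho$, the nontrivial zeros are invariant under $s\mapsto k+\frac{1}{2}-\bar{s}$, i.e. reflection in the line $\text{Re}(s)=\sigma_{0}$. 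Hence every nontrivial zero $\rho=\beta+i\gamma$ with $\beta>\sigma_{0}$ is matched by a zero $(k+\frac{1}{2}-\beta)+i\gamma$ with real part less than $\sigma_{0}$ and $|(k+\frac{1}{2}-\beta)-\sigma_{0}|=\beta-\sigma_{0}$; zeros on the line contribute nothing, and the finitely many trivial zeros (all having $\gamma=0$) contribute $O(1)$. Therefore
\begin{equation*}
\sum_{|\gamma|<T}\left|\beta-\sigma_{0}\right|=2\sum_{\substack{|\gamma|<T\\ \beta>\sigma_{0}}}\left(\beta-\sigma_{0}\right)+O(1),
\end{equation*}
and it suffices to show that the right-hand side is $\text{o}(T\log T)$.

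Next I would apply Littlewood's lemma to $F(s)=L(s,f)$ on the rectangle $R$ with vertices $\sigma_{0}\pm iT$ and $B\pm iT$, where $B$ is the abscissa from (\ref{zero free region}) beyond which $L(s,f)$ has no zeros (the height $T$ being chosen to avoid the ordinates of zeros, which is harmless). Since every zero satisfies $\beta<B$, this gives
\begin{equation*}
2\pi\sum_{\substack{|\gamma|<T\\ \beta>\sigma_{0}}}\left(\beta-\sigma_{0}\right)=\intop_{-T}^{T}\log\left|L(\sigma_{0}+it,f)\right|dt-\intop_{-T}^{T}\log\left|L(B+it,f)\right|dt+\intop_{\sigma_{0}}^{B}\Bigl(\arg L(\sigma+iT,f)-\arg L(\sigma-iT,f)\Bigr)d\sigma.
\end{equation*}
On the line $\text{Re}(s)=B$, the bound (\ref{zero free region}) together with $|\tilde{L}(s,f)-1|\le\frac{1}{2}$ gives $|L(B+it,f)|\asymp 1$, so the second integral is $O(T)$. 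For the argument integrals I would use the estimate $\arg L(\sigma+iT,f)=O(\log T)$, uniformly for $\sigma_{0}\le\sigma\le B$ --- this is the $S$-function bound underlying the Riemann--von Mangoldt formula $N_{f}(T)=\frac{2}{\pi}T\log T+O(T)$ of [\cite{berndt_zeros_(i)}, Theorem 10]; alternatively one bounds the number of sign changes of $\text{Re}\,L(\sigma+iT,f)$ on the segment by Jensen's formula --- so that term is $O(\log T)$.

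The main term is $\intop_{-T}^{T}\log|L(\sigma_{0}+it,f)|\,dt$, and here I would use the concavity of the logarithm: since $2\log|L|=\log|L|^{2}$,
\begin{equation*}
\frac{1}{2T}\intop_{-T}^{T}\log\bigl|L(\sigma_{0}+it,f)\bigr|^{2}dt\le\log\left(\frac{1}{2T}\intop_{-T}^{T}\bigl|L(\sigma_{0}+it,f)\bigr|^{2}dt\right),
\end{equation*}
and by Lemma \ref{ramachandra mean} together with the symmetry $|L(\sigma_{0}-it,f)|=|L(\sigma_{0}+it,f)|$, the inner integral equals $2A_{f}T\log T+O(T)$, so the right-hand side is $\log\log T+O(1)$. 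Thus $\intop_{-T}^{T}\log|L(\sigma_{0}+it,f)|\,dt\le T\log\log T+O(T)$. Combining the three estimates gives $\sum_{|\gamma|<T,\,\beta>\sigma_{0}}(\beta-\sigma_{0})=O(T\log\log T)$, whence
\begin{equation*}
\sum_{|\gamma|<T}\left|\beta-\tfrac{k}{2}-\tfrac{1}{4}\right|=O(T\log\log T)=\text{o}(T\log T)=\text{o}\bigl(N_{f}(T)\bigr),
\end{equation*}
which is (\ref{avrg density hyp}).

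\emph{The main obstacle.} The key analytic ingredient, Lemma \ref{ramachandra mean}, is already in hand, so the real work lies in the careful deployment of Littlewood's lemma: arranging that $L(s,f)$ does not vanish on the horizontal sides of $R$ (by a suitable choice of $T$, or a small indentation of the contour), and above all establishing the uniform bound $\arg L(\sigma+iT,f)\ll\log T$ on the top and bottom segments, which is where one must invoke --- or reprove --- the relevant part of the Riemann--von Mangoldt theory for $L(s,f)$.
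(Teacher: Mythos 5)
Your proposal is correct and follows essentially the same route as the paper: Littlewood's lemma on a rectangle extending from the critical line to a zero-free abscissa, an $O(T)$ bound on the far vertical side from absolute convergence, an $O(\log T)$ bound on the horizontal sides via Jensen's formula applied to the sign changes of $\text{Re}\,L(\sigma+iT,f)$, and the concavity of the logarithm combined with Lemma \ref{ramachandra mean} to get $T\log\log T+O(T)$ for the critical-line integral. The only cosmetic differences are that you justify the factor of $2$ (reflection symmetry of the zeros) at the outset rather than at the end, and you take the right edge of the rectangle at $B$ rather than at some $a>b>B$ as the paper does; neither affects the argument.
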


\begin{proof}
In order to prove this claim, we shall invoke Littlewood's lemma
(cf. [\cite{titchmarsh_zetafunction}, pp. 220-221]). Since $L(s,f)$ defines an analytic function
on the rectangle $\mathcal{R}=\left\{ s\in\mathbb{C}:\,\frac{k}{2}+\frac{1}{4}\leq\text{Re}(s)\leq a,\,\,-T<\text{Im}(s)\leq T\right\} $,
where $a>b>B$ is a positive real number which allows (\ref{cndition no zeros})
to happen, Littlewood's result can be applied to $L(s,f)$ in the domain $\mathcal{R}$. In fact, it states that 
\begin{equation}
\sum_{\beta>\frac{k}{2}+\frac{1}{4},\,|\gamma|<T}\left(\beta-\frac{k}{2}-\frac{1}{4}\right)=-\frac{1}{2\pi i}\,\intop_{\partial\mathcal{R}}\log\left(L(s,f)\right)\,ds.\label{Littlewood starting}
\end{equation}
Without any loss of generality, we may suppose that our choice of
$T$ satisfies $\min_{\gamma}\left\{ |\gamma+T|,|\gamma-T|\right\} \gg1/\log(T)$.
This choice comes at an expense of an error term of $O(\log(T))$ in the evaluation of the sum on the left-hand side of (\ref{Littlewood starting}) (cf. [\cite{berndt_zeros_(i)}, page 245, Theorem 3]). 
Continuing the development of the expression (\ref{Littlewood starting})
and taking into account that the sum on the left-hand side of (\ref{Littlewood starting})
is a real number, we are able to get the expression
\begin{align*}
\sum_{\beta>\frac{k}{2}+\frac{1}{4},\,|\gamma|<T}\left(\beta-\frac{k}{2}-\frac{1}{4}\right) & =\frac{1}{2\pi}\,\intop_{-T}^{T}\log\left|L\left(\frac{k}{2}+\frac{1}{4}+it,f\right)\right|\,dt-\frac{1}{2\pi}\intop_{-T}^{T}\log\left|L\left(a+it,f\right)\right|\,dt\\
 & -\frac{1}{2\pi}\intop_{\frac{k}{2}+\frac{1}{4}}^{a}\arg L(\sigma-iT,f)\,d\sigma+\intop_{\frac{k}{2}+\frac{1}{4}}^{a}\arg L(\sigma+iT,f)\,d\sigma.
\end{align*}

The estimate of the second integral is standard, because $a>B$ by
hypothesis and, according to (\ref{cndition no zeros}),
\begin{equation*}
\frac{1}{2\pi}\intop_{-T}^{T}\log\left|L\left(a+it,f\right)\right|\,dt=\frac{1}{2\pi}\intop_{-T}^{T}\log\left(\frac{|a_{f}(c)|}{c^{a}}\right)+\log\left|1+\frac{c^{a+it}}{a_{f}(c)}\sum_{n=c+1}^{\infty}\frac{a_{f}(n)}{n^{a+it}}\right|\,dt\ll T
\end{equation*}

The evaluation of the third and fourth integrals is a standard application
of Jensen's formula and we follow closely a similar argument outlined
in [\cite{berndt_zeros_(i)}, p. 252]: suppose that $\text{Re}\,L(\sigma+iT,f)$
has $N$ zeros for $\sigma\in\left[\frac{k}{2}+\frac{1}{4},a\right]$.
We then write $\left[\frac{k}{2}+\frac{1}{4},a\right]=\cup_{1\leq j\leq N+1}[\sigma_{j-1},\sigma_{j})$
in such a way that $\text{Re}\,L(\sigma+iT,f)$ has constant sign for
$\sigma\in[\sigma_{j-1},\sigma_{j})$. Since $\arg L(\sigma_{j}+iT,f)-\arg L(\sigma_{j-1}+iT,f)\leq\pi$,
we have that
\begin{equation}
|\arg L(\sigma+it,f)|\leq\pi(N+1)+\left|\arg L\left(\frac{k}{2}+\frac{1}{4}+it,f\right)\right|.\label{bound fot arg}
\end{equation}
In order to estimate $N$, consider the analytic function
\[
F(z):=\frac{1}{2}\left\{ L(z+iT,f)+\overline{L(\overline{z}+iT,f)}\right\} .
\]
Then one has that $F(\sigma)=\text{Re}\,L(\sigma+iT,f)$. Let $n_{F}(r)$
denote the number of zeros of $F(z)$ in the ball $|z-B|\leq r$.
Clearly, for any $R>0$,
\[
\intop_{0}^{2R}\frac{n_{F}(r)}{r}dr\geq\intop_{R}^{2R}\frac{n_{F}(r)}{r}dr\geq n_{F}(R)\,\log(2).
\]
Moreover, from Jensen's formula,
\[
n_{F}(R)\,\log(2)\leq\intop_{0}^{2R}\frac{n_{F}(r)}{r}dr=\frac{1}{2\pi}\intop_{0}^{2\pi}\log|F(B+2Re^{i\theta})|d\theta-\log|F(B)|,
\]
which implies that 
\begin{align}
n_{F}(R) & \leq\frac{1}{2\pi\log(2)}\intop_{0}^{2\pi}\log\left|F\left(B+2Re^{i\theta}\right)\right|d\theta-\frac{\log|F(B)|}{\log(2)}\nonumber \\
 & =\frac{1}{2\pi\log(2)}\intop_{0}^{2\pi}\log\left|F\left(B+2Re^{i\theta}\right)\right|d\theta+O(1),\label{after jensen}
\end{align}
where in the last step we have used the choice of $B$ in (\ref{cndition no zeros})
and the fact that the Dirichlet series defining $L(s,f)$ converges
absolutely for $\text{Re}(s)=B$. Now, let us take $R=B-a$ in (\ref{after jensen}): by the
Phragm\'en-Lindel\"of principle, we know that
\begin{equation}
\left|F(B+2Re^{i\theta})\right|\leq|T+2R|^{\frac{1}{2}+\epsilon}\ll T^{\frac{1}{2}+\epsilon},\label{lindelof inside integral}
\end{equation}
for any $\epsilon>0$. Combining (\ref{after jensen}) and (\ref{lindelof inside integral}),
we conclude that $n_{F}(R)=O(\log T)$. Since the open interval $(a,B)$
is clearly contained in the disc $|z-B|\leq R$, we have $N\leq n_{F}(R)$.
Returning to (\ref{bound fot arg}),
\[
|\arg L(\sigma+it,f)|\leq\pi(N+1)+O(1)\leq2\pi n_{F}(R)+O(1)=O(\log(T)),
\]
which proves the intermediate estimates 
\[
\intop_{\frac{k}{2}+\frac{1}{4}}^{a}\arg L(\sigma-iT,f)\,d\sigma\ll\log(T),\,\,\intop_{\frac{k}{2}+\frac{1}{4}}^{a}\arg L(\sigma+iT,f)\,d\sigma\ll\log(T).
\]

Therefore,
\begin{equation}
\sum_{\beta>\frac{k}{2}+\frac{1}{4},\,|\gamma|<T}\left(\beta-\frac{k}{2}-\frac{1}{4}\right)=\frac{1}{2\pi}\,\intop_{-T}^{T}\log\left|L\left(\frac{k}{2}+\frac{1}{4}+it,f\right)\right|\,dt+O(T)+O\left(\log(T)\right),\label{returning}
\end{equation}
and so it only remains to estimate the first integral above. This
can be done once we use Jensen's inequality and the mean value estimate
for $L(s,f)$, (\ref{mean value cusp form half}). A combination of both results yields
\begin{align*}
\frac{1}{2\pi}\,\intop_{-T}^{T}\log\left|L\left(\frac{k}{2}+\frac{1}{4}+it,f\right)\right|\,dt & \leq\frac{T}{\pi}\log\left(\frac{1}{2T}\intop_{-T}^{T}\left|L\left(\frac{k}{2}+\frac{1}{4}+it,f\right)\right|\,dt\right)\\
 & \leq\frac{T}{2\pi}\log\left(\frac{1}{2T}\,\intop_{-T}^{T}\left|L\left(\frac{k}{2}+\frac{1}{4}+it,f\right)\right|^{2}\,dt\right)\\
 & =\frac{T}{2\pi}\log\left(\frac{1}{T}\,\intop_{0}^{T}\left|L\left(\frac{k}{2}+\frac{1}{4}+it,f\right)\right|^{2}\,dt\right)\\
 & =\frac{T}{2\pi}\log\left(A_{f}\log(T)+O(1)\right)\\
 & =\frac{T}{2\pi}\log\log(T)+O(T).
\end{align*}

Returning to (\ref{returning}) and collecting all the available estimates,
we finally see that
\[
\sum_{\beta>\frac{k}{2}+\frac{1}{4},\,|\gamma|<T}\left(\beta-\frac{k}{2}-\frac{1}{4}\right)=\frac{T}{2\pi}\log\log(T)+O(T),
\]
from which we conclude that
\begin{equation*}
\sum_{|\gamma|<T}\left|\beta-\frac{k}{2}-\frac{1}{4}\right| =2\sum_{\beta>\frac{k}{2}+\frac{1}{4},\,|\gamma|<T}\left(\beta-\frac{k}{2}-\frac{1}{4}\right)=\frac{T}{\pi}\log\log(T)+O(T)=\text{o}\left(N_{f}(T)\right),
\end{equation*}
establishing the desired result.
\end{proof}

\textit{Acknowledgements:} This work was partially supported by CMUP, member of LASI, which is financed by national funds through FCT - Fundação para a Ciência e a Tecnologia, I.P., under the projects with reference UIDB/00144/2020 and UIDP/00144/2020. We also acknowledge the support from FCT (Portugal) through the PhD scholarship 2020.07359.BD. The author would like to thank to Semyon Yakubovich for unwavering support and guidance throughout the writing of this paper. 

\footnotesize

\end{document}